\documentclass[journal,twoside,web]{ieeecolor}
\bstctlcite{bstctl:etal, bstctl:nodash, bstctl:simpurl}

\usepackage{generic}
\usepackage{color,amsmath,mathrsfs,amsfonts,amssymb,graphicx,epsfig}
 \usepackage{amsthm}
\usepackage{subcaption}
\usepackage{multirow,rotating,diagbox}
\usepackage{algorithm,algpseudocode,algorithmicx}
\usepackage{cite,url,framed,bm,balance,nicematrix}
\usepackage{stmaryrd,hyperref}
\let\labelindent\relax
\usepackage{enumitem}
\hypersetup{
    colorlinks=true,
    linkcolor=blue,
    citecolor=blue,
    pdftitle={Overleaf Example},
    pdfpagemode=FullScreen,
    }

\newtheorem{theorem}{Theorem}
\newtheorem{corollary}[theorem]{Corollary}
\newtheorem{definition}{Definition}

\newtheorem{proposition}{Proposition}
\newtheorem{remark}{Remark}

\usepackage{tikz}
\usetikzlibrary{calc,trees,positioning,arrows,chains,shapes.geometric,decorations.pathreplacing,decorations.pathmorphing,shapes, matrix,shapes.symbols}

\usepackage[export]{adjustbox}

\makeatletter
\newcommand{\pushright}[1]{\ifmeasuring@#1\else\omit\hfill$\displaystyle{#1}$\fi\ignorespaces}

\newcommand{\differential}{{\rm{d}}}

\newcommand{\hess}{{\mathrm{Hess}}}

\newcommand{{\dx}}{{{\rm d} x}}
\newcommand{{\dy}}{{{\rm d} y}}
\newcommand{{\dt}}{{{\rm d} t}}

\makeatletter
\newcommand{\dotminus}{\mathbin{\text{\@dotminus}}}

\newcommand{\@dotminus}{%
  \ooalign{\hidewidth\raise1ex\hbox{.}\hidewidth\cr$\m@th-$\cr}%
}

\allowdisplaybreaks

\def\BibTeX{{\rm B\kern-.05em{\sc i\kern-.025em b}\kern-.08em
    T\kern-.1667em\lower.7ex\hbox{E}\kern-.125emX}}
\begin{document}
\bstctlcite{IEEE_b:BSTcontrol}
\title{Nonlinear Non-Gaussian Density Steering with Input and Noise Channel Mismatch: Sinkhorn with Memory for Solving the Control-affine Schr\"{o}dinger Bridge Problem}
\author{Georgiy A. Bondar, Asmaa Eldesoukey, Yongxin Chen, Abhishek Halder
\thanks{Georgiy A. Bondar is with the Department of Applied Mathematics, University of California Santa Cruz, CA 95064, USA, {\tt\footnotesize gbondar@ucsc.edu}.}
\thanks{Abhishek Halder and Asmaa Eldesoukey are with the Department of Aerospace Engineering, Iowa State University, Ames, IA 50011, USA, {\tt\footnotesize{\{ahalder,asmaae\}@iastate.edu}}.}
\thanks{Yongxin Chen is with the School of Aerospace Engineering, Georgia Institute of Technology, Atlanta, GA 30332, USA, {\tt\footnotesize{ychen3148@gatech.edu}}.}
\thanks{This research was partially supported by NSF awards 2111688, 2450377, 2450378.}
}

\maketitle

\begin{abstract}
Solutions to the Schr\"{o}dinger bridge problem and its generalizations yield feedback control policies for optimal density steering over a controlled diffusion. To numerically compute the same, the dynamic Sinkhorn recursion has become a standard approach. The mathematical engine behind this approach is the Hopf-Cole transform that recasts the conditions for optimality into a system of boundary-coupled linear PDEs. Recent works pointed out that for the control-affine Schr\"{o}dinger bridge problem, this exact linearity via Hopf-Cole transform, and thus the standard Sinkhorn recursion, apply only if the control and noise channels are proportional. When the channels do not match, the Hopf-Cole-transformed PDEs remain nonlinear, and no algorithm is available to solve the same. We advance the state-of-the-art by designing a Sinkhorn recursion with memory that leverages the structure of these nonlinear PDEs, and demonstrate how it solves the control-affine Schr\"{o}dinger bridge problem with input and noise channel mismatch. We prove the local stability of the proposed algorithm.
\end{abstract}

\begin{IEEEkeywords}
Schr\"{o}dinger bridge, Sinkhorn algorithm, diffusion, density steering.
\end{IEEEkeywords}


\section{Introduction}\label{sec:Intro}
The purpose of this work is to investigate the control-affine Schr\"{o}dinger bridge problem (CASBP) \cite{teter2025hopf,11208710} from a computational perspective. The CASBP is a stochastic optimal control problem of the form
\begin{subequations}
\begin{align}
&\underset{(\rho^{\bm{u}},\bm{u})\in\mathcal{P}_{01}\times\mathcal{U}}{\arg\inf}\quad\int_{t_0}^{t_1}\mathbb{E}_{\rho^{\bm{u}}}\left[q(t,\bm{x}^{u}) + \frac{1}{2}\|\bm{u}\|_2^2\right]\:\differential t\label{CASBPobj}\\
&\text{subject to}\nonumber\\
&\mathrm{d} \boldsymbol{x}^{\boldsymbol{u}}=\left(\boldsymbol{f}\left(t, \boldsymbol{x}^{\boldsymbol{u}}\right)+\boldsymbol{g}\left(t, \boldsymbol{x}^{\boldsymbol{u}}\right) \boldsymbol{u}\right) \mathrm{d} t+\boldsymbol{\sigma}\left(t, \boldsymbol{x}^{\boldsymbol{u}}\right) \mathrm{d} \boldsymbol{w}\label{CASBPsde},
\end{align}
\label{CASBP}
\end{subequations}%
where $0\leq t_0 < t_1 < \infty$, the control input $\bm{u}\in\mathbb{R}^{m}$, the controlled state $\bm{x}^{\bm{u}}\in\mathbb{R}^{n}$, the standard Wiener process $\bm{w}\in\mathbb{R}^{p}$, and for given probability density functions (PDFs) $\rho_0,\rho_1$ with \emph{finite second moments}, the feasible sets
\begin{subequations}
\begin{align}
\!\! \mathcal{P}_{01} &:= \{t \mapsto \rho(t, \cdot)\in\mathcal{C}^{1}([t_0,t_1]) \mid \rho \geq 0, \int \!\rho(t,\cdot) = 1\nonumber\\
&\quad\forall t \in[t_0,t_1],\rho\left(t_0, \cdot\right)=\rho_0(\cdot), \rho\left(t_1, \cdot\right)=\rho_1(\cdot)\},\label{defP01}\\
\mathcal{U} &:= \left\{\boldsymbol{u}:\left[t_0, t_1\right] \times \mathbb{R}^n \mapsto \mathbb{R}^m \mid\|\boldsymbol{u}\|_2^2<\infty\right\}.\label{defFeasibleInputSet}
\end{align}
\label{FeasibleSet}
\end{subequations}%
The state cost $q\geq 0$ is assumed to be bounded and integrable with respect to the measure $\rho^{\bm{u}}\differential\bm{x}^{\bm{u}}$ for all $(\rho^{\bm{u}},\bm{u})\in\mathcal{P}_{01}\times\mathcal{U}$. 

So the design objective is to guide the state from the initial PDF $\rho_0$ to the terminal PDF $\rho_1$ over a given finite time horizon $[t_0,t_1]$ via finite energy controls while minimizing an average additive cost (sum of a state cost $q\geq 0$ and a control cost $\frac{1}{2}\|\bm{u}\|_2^2$).

As standard, the positive semidefinite matrix field $\bm{\Sigma}:=\bm{\sigma\sigma}^{\top}\succeq\bm{0}$ is referred to as the \emph{diffusion tensor}. We make the following standard regularity assumptions on the drift, input, and noise coefficients $\bm{f},\bm{g},\bm{\sigma}$, respectively.
\begin{enumerate}[label=\textbf{A\arabic*}]
\item(\textbf{Non-explosion and Lipschitz coefficients}) There exist finite constants $c_1,c_2>0$ such that $\forall\bm{x},\bm{y}\in\mathbb{R}^n$, $\forall t\in[t_0,t_1]$, we have $\|\bm{f}(t,\bm{x})\|_2 + \|\bm{\sigma}\left(t,\bm{x}\right)\|_{2} 
\leq 
c_1\left(1 + \|\bm{x}\|_2\right)$, and
$\|\bm{f}(t,\bm{x}) - \bm{f}(t,\bm{y})\|_2 
\leq 
c_2 \|\bm{x}-\bm{y}\|_2$.    \label{A1}
\item (\textbf{Uniformly lower bounded diffusion tensor}) There exists a finite constant $c_3>0$ such that $\forall \bm{x}\in\mathbb{R}^{n}$, $\forall t\in[t_0,t_1]$, we have $\langle \bm{x}, \bm{\Sigma}(t,\bm{x})\bm{x}\rangle 
    \geq 
    c_3 \|\bm{x}\|_2^2$. \label{A2}
\item (\textbf{Uniformly upper bounded input coefficient}) There exists a finite constant $c_4>0$ such that $\forall \bm{x}\in\mathbb{R}^{n}$, $\forall t\in[t_0,t_1]$, we have $\|\bm{g}(t,\bm{x})\|_2< c_4$. \label{A3}
\end{enumerate}

The CASBP generalizes the classical SBP \cite{Sch31,Sch32} in that the latter is the following special case of \eqref{CASBP}: $$\bm{f}=\bm{0},\,\bm{g}=\bm{\sigma}=\bm{I},\,q=0.$$
Given problem data $\bm{f},\bm{g},\bm{\sigma},q,\rho_0,\rho_1$, the CASBP \eqref{CASBP}-\eqref{FeasibleSet} yields unique Markovian dynamic state feedback policy $\bm{u}\in\mathcal{U}$, see e.g., \cite{wakolbinger1990schrodinger}. This control policy guarantees that the controlled state statistics go from $\rho_0$ to $\rho_1$ over the specified time horizon $[t_0,t_1]$, in addition to minimizing the average cost \eqref{CASBPobj} subject to \eqref{CASBPsde}.

\subsubsection*{Channel mismatch} Our particular focus is on solving a generic CASBP with ``channel mismatch", by which we mean that the input coefficient $\bm{g}$ and the diffusion coefficient $\bm{\sigma}$ are not \emph{mutually related}. In particular, $\bm{gg}^{\top}\succeq\bm{0}$ is not proportional to the diffusion tensor $\bm{\Sigma}$. Stated differently, 
\begin{align}
\nexists\lambda>0\quad\text{such that}\quad \lambda\bm{gg}^{\top}-\bm{\Sigma}=\bm{0}.
\label{DifferentChannel}    
\end{align} 
This is the situation when the noise and the input do not act on the same subspace \cite[Sec. IV-B]{teter2025hopf}.

We clarify that the channel mismatch issue is pertinent for $n\geq 2$ states, i.e., when the size of the square matrices $\bm{gg}^{\top},\bm{\Sigma}$ are at least $2\times 2$. For $n=1$, the input and noise channels are trivially matched. Thus, in this work, we assume $n\geq 2$.

\begin{table*}[t!]
\centering
\caption{Summary of the control-affine Schrödinger bridge problems (CASBPs) and the nature of their solutions}
{\renewcommand{\arraystretch}{2}%
\begin{tabular}{| c | c | c | c |}
\hline
\diagbox{CASBP type}{Solution type} & Solvable & Algorithm & Performance guarantee\\ \hline
Same input and noise channels & linearly & dynamic Sinkhorn recursion & global convergence\\ \hline
Different input and noise channels & nonlinearly & dynamic Sinkhorn recursion with memory & local stability\\ 
& & (proposed in this work, Sec. \ref{subsec:ProposedAlgorithm}) & (Sec. \ref{sec:ConvergenceAnalysis})\\
\hline
\end{tabular}
}
\label{Table:CASBPSummary}
\end{table*}

\subsubsection*{Related works} Several works \cite{wakolbinger1990schrodinger,blaquiere1992controllability,caluya2020finite,caluya2021reflected,teter2024weyl,11239424,teter2025markov} have studied the stochastic control-theoretic generalizations of the classical Schr\"{o}dinger bridge, including the control-affine \cite{caluya2021wasserstein,chen2021stochastic,teter2025hopf,11208710} and control-non-affine \cite{nodozi2023neural,nodozi2023physics} cases. While works such as \cite{caluya2021wasserstein,chen2021stochastic} proposed solving the CASBP via dynamic Sinkhorn recursions, they made the assumption $\bm{gg}^{\top}\propto\bm{\Sigma}$ \emph{a priori} -- motivated partly by computational convenience, and partly by that the input and noise channels indeed coincide in practical situations such as when the process noise enters through stochastic actuation or external forcing. 

More recent works \cite{teter2025hopf,11208710} explicitly pointed out that for CASBP with channel mismatch, when the Hopf-Cole transformation \cite{hopf1950partial,cole1951quasi} is applied to the first order conditions for optimality, a boundary coupled system of linear PDEs are obtained, and that system can be solved by standard Sinkhorn algorithm, only if $\lambda\bm{gg}^{\top}-\bm{\Sigma}=\bm{0}$ for some $\lambda>0$. 

When the channels do not match, the Hopf-Cole-transformed PDE system remains nonlinear, and it is unclear whether a generalized variant of the Sinkhorn algorithm can be designed to solve the same.

In the linear Gaussian (i.e., covariance steering) special case of \eqref{CASBP}, the work in \cite{chen2015optimal} considered different input and noise channels, and derived\footnote{In the notation of reference \cite{chen2015optimal}, the term $\lambda\bm{gg}^{\top}-\bm{\Sigma}$ appears there as $\bm{BB}^{\top}-\bm{B}_{1}\bm{B}_{1}^{\top}$, where $\bm{B},\bm{B}_{1}$ are the state-independent input and noise coefficient matrices, respectively.} a system of coupled nonlinear matrix ODE boundary value problem \cite[eq. (11)]{chen2015optimal}. That reference approximated the solution of that matrix boundary value problem via time discretization and semidefinite programming. In contrast, the focus of this work is to solve \emph{nonlinear non-Gaussian CASBPs with channel mismatch}.

\subsubsection*{Motivation} The technical motivation behind this work is to remedy the lack of a computational algorithm to solve the generic CASBP with channel mismatch. The channel mismatch is practical and also motivated by cases where the noise may represent modeling uncertainty in addition to disturbances or imperfections in actuation \cite[eq. (7.1)]{pan1999backstepping}, \cite[eq. (90)]{li2021stochastic}, or when the input is affected by a lag in actuation dynamics \cite[Sec. V, Example 2]{chen2015optimal}. In such cases, the proportionality relation $\bm{gg}^{\top}\propto\bm{\Sigma}$ is violated.

More broadly, solving stochastic optimal control problems of the form \eqref{CASBP}-\eqref{FeasibleSet} is motivated by shaping state distributions via feedback control \cite{brockett2012notes}, i.e., direct control of uncertainties. For example, the specification of the initial PDF $\rho_0$ can be interpreted as the initial uncertainty obtained from an estimator, and that of the terminal PDF $\rho_1$ as the desired statistical performance to achieve over the specified time horizon $[t_0,t_1]$.   

\subsubsection*{Connections with linearly solvable stochastic optimal control} The proportionality relation $\bm{gg}^{\top}\propto\bm{\Sigma}$, i.e., $\lambda\bm{gg}^{\top}-\bm{\Sigma}=\bm{0}$ for some $\lambda>0$, has appeared before in the stochastic optimal control literature in a different context. Specifically, it was pointed out in \cite{kappen2005linear,todorov2009efficient,horowitz2014linear} that when solving stochastic optimal control problems of the form \eqref{CASBP} with the exception that the endpoint PDF constraints are removed, and a terminal cost of the form $\mathbb{E}_{\rho^{\bm{u}}}\left[\phi(\bm{x}^{u}(t_1))\right]$ for some suitable $\phi(\cdot)$ is added in the objective \eqref{CASBPobj}, the associated Hamilton-Jacobi-Bellman PDE can be \emph{exactly transformed to a linear PDE} provided $\lambda\bm{gg}^{\top}-\bm{\Sigma}=\bm{0}$. Such stochastic optimal control problems are called \emph{linearly solvable} \cite{kappen2005linear,todorov2009efficient,horowitz2014linear} since by standard dynamic programming arguments \cite[Ch. 4]{fleming2006controlled}, the optimal control can then be recovered from the solution of this transformed linear PDE.

When the relation $\bm{gg}^{\top}\propto\bm{\Sigma}$ does not hold, then the problem is not linearly solvable in the sense that computing the optimal control requires solving the second-order \emph{nonlinear} Hamilton-Jacobi-Bellman PDE. 

We will explain in Sec. \ref{sec:SolutionOfCASBP} that in the CASBP too, the lack of the same proportionality relation brings nonlinearity in a different way.

\subsubsection*{Contributions} Our contributions are the following.
\begin{itemize}
\item To solve the system of nonlinear PDEs arising in generic CASBP with channel mismatch, we propose a new dynamic Sinkhorn algorithm (Sec. \ref{subsec:ProposedAlgorithm}) that uses--in each epoch--memory from the most recent backward pass to be able to perform a forward pass in time.

\item We provide a local stability guarantee (Sec. \ref{sec:ConvergenceAnalysis}) for the proposed Sinkhorn algorithm with memory.

\item We demonstrate the effectiveness of the proposed generalization of the Sinkhorn algorithm via numerical examples (Sec. \ref{sec:NumericalExample}).
\end{itemize}

At a conceptual level, this work clarifies that when the input and noise channels match, then the CASBP is \emph{linearly solvable} in the sense the problem reduces to solving a boundary coupled system of \emph{linear PDEs} in the so-called Schr\"{o}dinger factors, and \emph{standard dynamic Sinkhorn recursions} apply with global convergence guarantees. 

In contrast, when these channels do not match, then the problem reduces to solving a boundary coupled system of \emph{nonlinear PDEs} in the Schr\"{o}dinger factors, which is shown to be solvable by \emph{dynamic Sinkhorn recursions with memory}. However, in this more general case, we only establish local stability guarantees. In this sense, the CASBP then is \emph{locally nonlinearly solvable}. For the readers' convenience, this is summarized in Table \ref{Table:CASBPSummary}.    

\subsubsection*{Organization} Sec. \ref{sec:Prelim} gathers some technical preliminaries that are used later in this work. In Sec. \ref{sec:SolutionOfCASBP}, we explain the main ideas in the context of state-of-the-art results from existing literature, and then propose the new algorithm. Sec. \ref{sec:ConvergenceAnalysis} undertakes an analysis of the proposed Sinkhorn algorithm with memory, and establishes a stability guarantee. Numerical results in Sec. \ref{sec:NumericalExample} illustrate that the proposed algorithm works well in practice. Sec. \ref{sec:Conclusions} concludes the work.


\section{Preliminaries}\label{sec:Prelim}
We begin our exposition by collecting some concepts and results needed in the sequel.  

\subsection{Cones, Hilbert's Projective Metric, and Contraction}\label{subsec:ConePrelim}
For $D \subseteq \mathbb R^n$, we consider the Banach space $L^\infty(D)$ with norm
$\Vert \cdot \Vert_\infty := \mathrm{ess} \sup_{\bm{x}\in D} \vert \cdot \vert$. Throughout, we view all extrema and inequalities involving functions in $L^\infty(D)$ in the almost-everywhere sense, and to reduce clutter, we suppress the $\mathrm{ess}$ notation.

Let 
\begin{align}
    K := \{ f \in L^\infty(D): f\geq  0 \}
\label{defCone}    
\end{align}
be the closed solid cone of nonnegative functions in $L^\infty(D)$, and denote the interior of $K$ by $K_+$. That is,
\begin{align}
    K_+ := \{ f \in L^\infty(D): \inf_{\bm{x}\in D} f(\bm{x}) >  0\}.
\label{defConeInterior}    
\end{align}

Given any $u,v \in K_+$, \emph{Hilbert's projective metric} $d_{{\mathrm{H}}}(\cdot,\cdot)$ is defined as  
\begin{align}
    d_{{\mathrm{H}}}(u,v):=\log\left(\dfrac{\inf\{\theta >0 \mid u\preceq \theta v\}}{\sup\{\theta >0 \mid \theta v\preceq u\}}\right)
\label{HilbertMetric}    
\end{align}
where $\preceq$ denotes the conic inequality, i.e., $u \preceq v$ if and only if $v - u \in K$, see \cite{birkhoff1957extensions,bushell1973hilbert,georgiou2015positive}.
Equivalently, 
\begin{align}\label{def:H-alt}
    d_{{\mathrm{H}}}(u,v) =\log \sup_{\bm{x}} \frac{u(\bm{x})}{v(\bm{x})} - \log \inf_{\bm{x}} \frac{u(\bm{x})}{v(\bm{x})} \quad \forall u, v \in K_+.
\end{align}
The term \emph{projective} refers to invariance under positive scalings, i.e., $d_{\mathrm{H}}(u, v) =0$ whenever $u = cv$ for any $c>0$. 

\begin{definition}[$p$-Homogeneity]\label{def:homogeneity}
Consider $K,K_+$ as in \eqref{defCone}-\eqref{defConeInterior}. Let $\mathcal G: K_+ \to K_+$ be a map on the interior of $K$, i.e., a \emph{positive} map. If 
    \begin{align}
        \mathcal G(\alpha u) = \alpha^p \mathcal G(u) \quad \forall u \in K_+, \;\alpha>0,
    \end{align}
    we say $\mathcal G$ is homogeneous of degree $p$ (or $p$-homogeneous).
\end{definition}
\begin{definition}[Global Contraction on $K_+$]\label{def:contractionratio}
    Consider $K,K_+$ as in \eqref{defCone}-\eqref{defConeInterior}. The map $\mathcal G:K_+ \to K_+$ is called globally nonexpansive with respect to $d_{\mathrm{H}}(\cdot, \cdot)$ if there exists $0<\kappa\leq 1$ such that
    \begin{align}
        d_{\mathrm{H}}(\mathcal G(u),\mathcal G(v)) \leq \kappa \, d_{\mathrm{H}}(u,v) \quad\forall u,v \in K_+.
    \label{defGloballyNonexpansiveInHilbertMetric}    
    \end{align}
     The map $\mathcal G$ is said to be globally contractive if \eqref{defGloballyNonexpansiveInHilbertMetric} holds for $0<\kappa \lneq1 $. The smallest $\kappa$ for which the map is contractive is called the contraction ratio of $\mathcal G$, denoted as $\kappa_{\rm H}(\mathcal G)$.
\end{definition}

For a positive \emph{linear} map $\mathcal G$, a relevant quantity is the \emph{projective diameter}
\begin{align}
{\mathrm{ProjDiam}}(\mathcal G)  := \sup_{u,v \in K_+} d_{\rm H} (\mathcal G(u), \mathcal G(v)).
\label{defProjectiveDiameter}
\end{align}
An inequality \cite[p. 2380]{chen2016entropic}
that will be used later is 
\begin{align} \label{eq:proj-diam-ineq}
    {\mathrm{ProjDiam}}(\mathcal G) \leq 2 \sup_{u \in K_+} d_{\rm H} (\mathcal G(u), v) ,
\end{align}
for any $v \in K_+$.

\begin{theorem}  [Birkhoff-Bushell Theorem \cite{bushell1973hilbert,birkhoff1957extensions}]\label{thm:BBtheorem}
Consider $K, K_+$ as in \eqref{defCone}-\eqref{defConeInterior}, the mapping $\mathcal G: K_+ \to K_+$, and the contraction ratio as in Definition \ref{def:contractionratio}. If $\mathcal G$ is homogeneous of degree $p$, and is monotone increasing, i.e., 
     \begin{align*}
        u \preceq v  \quad \Longrightarrow  \quad \mathcal G(u) \preceq \mathcal G(v),
     \end{align*}
then \eqref{defGloballyNonexpansiveInHilbertMetric} holds with
     \begin{align}
         \kappa_{\rm H}(\mathcal G)  \leq p.
     \end{align}
In the special case $\mathcal G$ is linear,
\begin{align}
  \kappa_{\rm H}(\mathcal G) \leq \tanh\left( \frac{1}{4}   {\mathrm{ProjDiam}}(\mathcal G)\right).
\end{align}
That is, if $\mathcal G:K_+ \to K_+$ is linear, and ${\mathrm{ProjDiam}}(\mathcal G) < \infty$, then $\mathcal G$ is globally contractive. 
\end{theorem}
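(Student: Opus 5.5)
The plan is to prove the two assertions of Theorem \ref{thm:BBtheorem} separately. Both rest on the characterization \eqref{def:H-alt}: for $u,v\in K_+$, set $M:=\sup_{\bm{x}} u/v$ and $m:=\inf_{\bm{x}} u/v$, so that $m v\preceq u\preceq M v$ and $d_{\rm H}(u,v)=\log(M/m)$. By \eqref{HilbertMetric}, these two sandwich constants are exactly the optimal $\theta$'s.

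\emph{The $p$-homogeneous monotone case.} First apply monotonicity to the sandwich, which gives $\mathcal G(mv)\preceq\mathcal G(u)\preceq\mathcal G(Mv)$. Next, $p$-homogeneity turns this into $m^p\mathcal G(v)\preceq \mathcal G(u)\preceq M^p\mathcal G(v)$. Hence the infimum in \eqref{HilbertMetric} for the pair $(\mathcal G(u),\mathcal G(v))$ is at most $M^p$, and the supremum is at least $m^p$. It follows that
\begin{align*}
d_{\rm H}(\mathcal G(u),\mathcal G(v))\le \log(M^p/m^p)=p\,d_{\rm H}(u,v),
\end{align*}
that is, $\kappa_{\rm H}(\mathcal G)\le p$. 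This step is routine. The only care needed is that $\mathcal G(v)\in K_+$, so that the ratios are well defined.

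\emph{The linear case (Birkhoff's contraction).} Let $\Delta:={\rm ProjDiam}(\mathcal G)<\infty$. If $d_{\rm H}(u,v)=0$ there is nothing to prove, so assume $u\neq cv$. Then $a:=u-mv\succeq 0$ and $b:=Mv-u\succeq 0$. Linearity gives
\begin{align*}
(M-m)\,\mathcal G u=M\,\mathcal G a+m\,\mathcal G b,\qquad (M-m)\,\mathcal G v=\mathcal G a+\mathcal G b.
\end{align*}
The elements $a$ and $b$ may lie on the boundary of $K$. To handle this, perturb them to $a+\epsilon v$ and $b+\epsilon v$, which lie in $K_+$, and let $\epsilon\to 0$. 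The diameter bound then gives constants $\lambda,\mu$ with $\lambda\,\mathcal G b\preceq\mathcal G a\preceq\mu\,\mathcal G b$ and $\log(\mu/\lambda)\le\Delta$. Pointwise, $\mathcal G u/\mathcal G v=(Mt+m)/(t+1)$ with $t:=\mathcal G a/\mathcal G b\in[\lambda,\mu]$. This is a monotone Möbius function of $t$. Therefore
\begin{align*}
d_{\rm H}(\mathcal G u,\mathcal G v)\le \log\frac{(M\mu+m)(\lambda+1)}{(\mu+1)(M\lambda+m)}.
\end{align*}

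The main obstacle is the final optimization: showing that this last quantity is at most $\tanh(\Delta/4)\log(M/m)$. I would substitute $M/m=e^{s}$ and $\mu/\lambda=e^{\delta}$, maximize over the free scale of $\lambda$ to get a bound $h(s,\delta)$, and then show $h(s,\delta)\le s\tanh(\delta/4)$. The latter follows from concavity of $h$ in $s$ with $h(0,\delta)=0$ and $\partial_s h(0,\delta)=\tanh(\delta/4)$, and the derivative identity is computed via the cross-ratio. Alternatively, I could cite this classical inequality from \cite{birkhoff1957extensions,bushell1973hilbert}. The last sentence of the statement is then immediate: $\Delta<\infty$ gives $\tanh(\Delta/4)<1$, so $\mathcal G$ is globally contractive in the sense of Definition \ref{def:contractionratio}.
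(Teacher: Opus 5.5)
The paper does not prove this theorem. It cites Birkhoff and Bushell and only uses the result: the case $p=1$ for $\mathcal B$ in Proposition~\ref{prop:Bnonexpansive}, and the linear diameter bound for $\mathcal F_\varphi$. Your proposal is a correct, self-contained reconstruction of the classical argument.

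In the first part, you push the sandwich $mv\preceq u\preceq Mv$ through monotonicity and $p$-homogeneity. Here $m>0$ holds because $\inf u>0$ and $v\in L^\infty$. In the linear part, you use the decomposition $(M-m)\mathcal Gu=M\mathcal Ga+m\mathcal Gb$, $(M-m)\mathcal Gv=\mathcal Ga+\mathcal Gb$, followed by the monotone M\"obius map; this is the standard Bushell-type proof. Your version buys transparency about the $L^\infty$ setting (essential extrema, elements on $\partial K$), while the paper's citation buys brevity. Incidentally, monotonicity together with $p$-homogeneity forces $p\ge 0$, and the first claim is a nonexpansiveness statement in the sense of Definition~\ref{def:contractionratio} only when $p\le 1$; the paper only needs $p=1$.

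Two small points would tighten your write-up.

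First, do not pass to the limit at the level of $\lambda,\mu$, because these constants can degenerate. Take for example $\mathcal Gw=(\int_E w)\,\bm 1$ with $E\subsetneq D$, which has diameter $0$. With $v=\bm 1$ and $u=\bm 1+\bm 1_{D\setminus E}$ you get $\mathcal Ga=0$, and the optimal $\lambda_\epsilon,\mu_\epsilon\to 0$. Instead, note that $a+\epsilon v=u-(m-\epsilon)v$ and $b+\epsilon v=(M+\epsilon)v-u$ lie in $K_+$. Run the whole argument with $(m-\epsilon,M+\epsilon)$ in place of $(m,M)$ to obtain $d_{\rm H}(\mathcal Gu,\mathcal Gv)\le\tanh(\Delta/4)\log\frac{M+\epsilon}{m-\epsilon}$, and only then let $\epsilon\to 0$.

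Second, your optimization step is correct. The supremum over the scale is $h(s,\delta)=2\log\frac{\cosh((s+\delta)/4)}{\cosh((s-\delta)/4)}$, attained at the symmetric configuration, and $\partial_s^2 h\le 0$ for $s,\delta\ge 0$. You can, however, skip locating the maximizer. Write $\lambda=e^{x}$, $\mu=e^{x+\delta}$, $\phi(y)=\log(1+e^{y})$ and $\sigma=\phi'$. Your bound then equals $\int_x^{x+s}[\sigma(y+\delta)-\sigma(y)]\,\mathrm{d}y$. Moreover, $\sigma(y+\delta)-\sigma(y)=\frac12[\tanh(\frac{y+\delta}{2})-\tanh(\frac{y}{2})]\le\tanh(\delta/4)$, because $\mathrm{sech}^2$ is even and decreasing in $|\cdot|$. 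This yields the bound $s\tanh(\delta/4)$ uniformly in $x$, with no concavity argument needed.
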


\subsection{Comparison Principle for Parabolic PDEs}\label{subsec:parabolicPDEprelim}

\begin{theorem}[Comparison Principle {\normalfont \cite[Theorem 9.1]{lieberman1996second}}]
\label{thm:comparisonprinciple}
For $D\subset\mathbb{R}^{n}$, consider a spatio-temporal domain $\Omega := [t_0,t_1] \times D$ and its parabolic boundary
\begin{align}
 \mathscr{P}\Omega := (\{t_0\}\times \overline{D}) \cup ((t_0,t_1)\times\partial D),
 \end{align}
 where $\overline{D}, \partial D$ denote the topological closure and the boundary of $D$, respectively.
    Consider the quasilinear parabolic PDE
    \begin{align}
   -\partial_t& u(t,\bm{x}) \!+\! \sum_{i,j=1}^{n}a_{ij}(t,\bm{x},u,\nabla_{\bm{x}} u)\partial_{x_{i} x_{j}}u(t,\bm{x}) \nonumber \\
    &\qquad+ b(t,\bm{x},u,\nabla_{\bm{x}}u) =0, \quad \forall(t,\bm{x}) \in \Omega.
    \label{eq:QuasilinearParabolicPDE}
    \end{align}
    
In \eqref{eq:QuasilinearParabolicPDE}, suppose that $a_{ij}$ are independent of $u$, and $b$ is decreasing in $u$. 
Then, for smooth solutions $u$ and $v$ of \eqref{eq:QuasilinearParabolicPDE},
    we have
    \begin{align}
     u\leq v \text{ on } \mathscr{P}\Omega \quad\Longrightarrow \quad u\leq v \text{ on } [t_0,t_1] \times \overline{D}.
     \end{align}   
\end{theorem}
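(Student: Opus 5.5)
The statement just above is the Comparison Principle (Theorem \ref{thm:comparisonprinciple}), quoted from \cite[Theorem 9.1]{lieberman1996second}. I would prove it by the standard maximum-principle argument: pass to the difference $w := u - v$, obtain a linear parabolic inequality for it, and rule out a positive interior maximum. The PDE is written with $-\partial_t u$ as the time term, so the time direction must match the parabolic boundary $\mathscr{P}\Omega$, which contains the slice $\{t_0\}\times\overline D$. I would therefore read \eqref{eq:QuasilinearParabolicPDE} as forward parabolic after the substitution $t\mapsto -t$, or equivalently treat the sign convention so that the ellipticity condition $\sum a_{ij}\xi_i\xi_j\ge 0$ gives the correct direction relative to data at $t_0$. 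Either way, the structure of the argument is unchanged, and below I write it for the forward-parabolic form with data at $t_0$.

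\textbf{Step 1: reduce to a strict inequality.} For $\varepsilon>0$, set $w_\varepsilon := u - v - \varepsilon(1 + t - t_0)$. The term $-\varepsilon(1+t-t_0)$ buys a strict sign in the time derivative. On $\mathscr{P}\Omega$ we then have $w_\varepsilon \le -\varepsilon < 0$. The goal is to show $w_\varepsilon< 0$ throughout $[t_0,t_1]\times\overline D$ and then let $\varepsilon\to 0$.

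\textbf{Step 2: contradiction at a first touching point.} Suppose $w_\varepsilon$ is not negative everywhere. Then there is a first time $t^*>t_0$ and a point $\bm{x}^*\in D$ (interior, because $w_\varepsilon<0$ on the lateral boundary) with $w_\varepsilon(t^*,\bm{x}^*)=0$ and $w_\varepsilon\le 0$ for all $t\le t^*$. At $(t^*,\bm{x}^*)$ the following hold:
\begin{itemize}
\item $u$ touches $v+\varepsilon(1+t-t_0)$ from below, so the two functions agree there;
\item $\nabla_{\bm{x}} u=\nabla_{\bm{x}} v$;
\item the Hessian satisfies $D^2u\le D^2 v$;
\item $\partial_t w_\varepsilon\ge 0$, i.e.\ $\partial_t u\ge \partial_t v+\varepsilon$.
\end{itemize}
Subtract the two PDEs at this point. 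The $a_{ij}$ do not depend on $u$, and the gradients agree, so the $a_{ij}$ are evaluated at identical arguments for both functions. Positive semidefiniteness of $(a_{ij})$ together with $D^2u\le D^2v$ gives $\sum a_{ij}\partial_{ij}(u-v)\le 0$. Since $b$ is decreasing in its $u$-slot and $u>v$ at the touching point, $b(\cdot,u,\nabla u)-b(\cdot,v,\nabla v)\le 0$. Combining these with the strict time term produces a strict sign contradiction with the equality obtained by subtracting the PDEs. Hence $w_\varepsilon<0$ everywhere, and letting $\varepsilon\to0$ gives $u\le v$.

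\textbf{Main obstacle.} The delicate step is Step 2. It requires an interior first touching point, so one must handle an unbounded $D$ (for example via growth conditions or a barrier such as $\varepsilon|\bm{x}|^2$) and the closure at $t_1$. It also requires getting the sign conventions right so that the $\varepsilon$-perturbation yields a strict inequality of the correct sign. For the purposes of this paper I would simply invoke \cite[Theorem 9.1]{lieberman1996second}, where these technicalities are handled.
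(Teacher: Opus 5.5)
\textbf{Verdict: your argument is correct for bounded $D$, and it differs from the paper, which gives no proof.} The paper imports the statement verbatim from \cite[Theorem 9.1]{lieberman1996second}.

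Your first-touching-point argument is a complete, self-contained proof in the setting the paper actually needs: bounded $D$, with $u,v$ continuous on $[t_0,t_1]\times\overline{D}$ and $C^{2,1}$ inside. At the first zero of $w_\varepsilon$ you have $u>v$, $\nabla_{\bm{x}}u=\nabla_{\bm{x}}v$, $D^2(u-v)\preceq 0$ and $\partial_t(u-v)\geq\varepsilon$. Subtracting the two equations then gives $0\leq-\varepsilon$.

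Compared with the bare citation, your proof makes explicit how little is used:
\begin{itemize}
\item only $(a_{ij})\succeq 0$, so degenerate parabolicity is enough;
\item independence of $a_{ij}$ from $u$;
\item monotonicity of $b$ in its $u$-slot at the single touching point, for the common gradient and between the values $v$ and $u$.
\end{itemize}
No regularity of $a_{ij},b$ in the gradient variable and no uniform parabolicity are needed, and the same computation covers a sub/supersolution pair.

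That last feature matters for the paper. Proposition~\ref{prop:Bnonexpansive} imposes $q_\varphi+q/\lambda\geq 0$ on solutions, not as a structural condition on $b$ for all arguments. The statement as quoted asks for $b$ to be decreasing in $u$ outright. Your proof shows that the monotonicity is only needed at the touching point. For the paper's $b$, it follows from $q_\varphi+q/\lambda\geq0$ evaluated there on the smaller of the two solutions.

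\textbf{Two points in your framing need correcting.}

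\emph{First, no time reversal is needed, and applying one would be wrong.} Equation \eqref{eq:QuasilinearParabolicPDE} is just $\partial_t u=\sum_{i,j}a_{ij}\partial_{x_ix_j}u+b$, which for $(a_{ij})\succeq 0$ is already forward parabolic; this is the sign convention of the cited reference. So data on $\{t_0\}\times\overline{D}$ is the correct one. Substituting $t\mapsto -t$ would turn it into a backward equation, for which the conclusion fails. Your Step 2 in fact uses the equation as written (the $-\varepsilon$ comes from the $-\partial_t$ term), so only the remark should go, not the proof. The time reversal belongs in the proof of Proposition~\ref{prop:Bnonexpansive}, where it brings the backward equation \eqref{varphiPDE} into this form.

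\emph{Second, an unbounded $D$ is not a technicality to be patched by a barrier.} For $D=\mathbb{R}^n$ the statement as written is false. Tychonoff's smooth nonzero solution of $\partial_t u=\Delta u$ with zero initial data (or its negative) violates comparison with $v\equiv 0$. The theorem has to be read with $D$ bounded, or with growth conditions on $u,v$. That is how the paper uses it (compact $D$ in Sec.~\ref{sec:ConvergenceAnalysis}), and in that setting your argument is complete.
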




\section{Solution of the CASBP and its Computation}\label{sec:SolutionOfCASBP}
In Sec. \ref{subsec:SolutionStructure}, we discuss the solution structure for the generic CASBP \eqref{CASBP}-\eqref{FeasibleSet}. In Sec. \ref{subsec:SinkhornForMatchedChannel}, we explain why the case $\bm{gg}^{\top}\propto\bm{\Sigma}$ is amenable to dynamic Sinkhorn recursion. Then in Sec. \ref{subsec:MemoryFromBackwardPass}, we motivate our new idea of dynamic Sinkhorn recursion with memory for the case $\bm{gg}^{\top}\not\propto\bm{\Sigma}$. The proposed algorithm is summarized in Sec. \ref{subsec:ProposedAlgorithm}. 


\subsection{Solution Structure for the CASBP}\label{subsec:SolutionStructure}
Let $\langle\cdot,\cdot\rangle$ denote the Frobenius inner product, and 
\begin{align*}\Delta_{\bm{\Sigma}}\rho:=\displaystyle\sum_{i,j=1}^{n}\partial_{x_{i }x_{j}}\left(\bm{\Sigma}_{ij}(t,\bm{x})\rho(t,\bm{x})\right)
\end{align*}
denote the weighted Laplacian. We use $\Delta$ to denote the standard Laplacian, i.e., $\Delta\equiv\Delta_{\bm{I}}$.

It is known \cite[Sec. III]{teter2025hopf} that the first-order conditions for optimality for the CASBP \eqref{CASBP}-\eqref{FeasibleSet} are
\begin{subequations}
\begin{align}
&\partial_{t}\rho^{\bm{u}}_{\mathrm{opt}} + \nabla_{\bm{x}}\cdot\big(\rho^{\bm{u}}_{\mathrm{opt}}\big(\bm{f}+\bm{g}\bm{g}^{\top}\nabla_{\bm{x}}S\big)\big) = \frac{1}{2}\Delta_{\bm{\Sigma}}\:\rho^{\bm{u}}_{\mathrm{opt}}, \nonumber
\\ 
&\hspace*{2.25in}\text{(primal PDE)} \label{PrimalPDE}
 \\ &\partial_{t} S +\langle\nabla_{\bm{x}}S,\bm{f}\rangle\! +\!\frac{1}{2}\langle\nabla_{\bm{x}}S,\bm{gg}^{\top}\nabla_{\bm{x}}S\rangle +\frac{1}{2}\langle\bm{\Sigma},\hess_{\bm{x}}S\rangle = q,
\nonumber \\ 
&\hspace*{2.4in}\text{(dual PDE)} \label{DualPDE}\\
&\rho^{\bm{u}}_{\mathrm{opt}}\left(t=t_0,\cdot\right)=\rho_0(\cdot), \quad \rho^{\bm{u}}_{\mathrm{opt}}\left(t=t_1,\cdot\right)=\rho_1(\cdot). 
\nonumber  \\
&\hspace*{1.35in}(\text{primal boundary conditions}) \label{PrimalBC}
\end{align}
\label{FOCO}    
\end{subequations}%
\noindent The coupled system of equations \eqref{FOCO} is in unknown primal-dual pair $\left(\rho^{\bm{u}}_{\mathrm{opt}},S\right)$, wherein $\rho^{\bm{u}}_{\mathrm{opt}}$ is the optimally controlled joint state PDF, and $S\in\mathscr{C}^{1,2}\left([t_0,t_1],\mathbb{R}^{n};\mathbb{R}\right)$ is a Lagrange multiplier function\footnote{Here $\mathscr{C}^{1,2}\left([t_0,t_1],\mathbb{R}^{n};\mathbb{R}\right)$ denotes the class of scalar-valued functions $S(t,\bm{x})$ which are once continuously differentiable with respect to time $t\in[t_0,t_1]$ and twice continuously differentiable with respect to state $\bm{x}\in\mathbb{R}^{n}$.}. The optimal control is
\begin{align}
\bm{u}_{\mathrm{opt}} = \bm{g}^{\top}\nabla_{\bm{x}}S.
\label{OptimalControl}    
\end{align}

A direct numerical solution of \eqref{FOCO} is challenging since the nonlinear PDEs \eqref{PrimalPDE}-\eqref{DualPDE} are coupled in primal-dual pair $\left(\rho^{\bm{u}}_{\mathrm{opt}},S\right)$, but the boundary conditions \eqref{PrimalBC} are in terms of $\rho^{\bm{u}}_{\mathrm{opt}}$ only. 

To recast \eqref{FOCO} in a more amenable form, we fix a parameter $\lambda>0$, and consider the Hopf-Cole transform \cite{hopf1950partial,cole1951quasi}
$$\left(\rho^{\bm{u}}_{\mathrm{opt}},S\right)\mapsto\left(\widehat{\varphi},\varphi\right) := \left(\rho^{\bm{u}}_{\mathrm{opt}}\exp\left(-S/\lambda\right),\exp\left(S/\lambda\right)\right).$$
Applied to \eqref{FOCO}, this results in \cite[Theorem 2]{teter2025hopf} the transformed system:
\begin{subequations}
\begin{align}
&\!\!\partial_{t}\widehat{\varphi} + \nabla_{\bm{x}}\cdot\left(\widehat{\varphi}\left(\bm{f}+\bm{f}_{\varphi}\right)\right) - \dfrac{1}{2}\Delta_{\bm{\Sigma}}\:\widehat{\varphi} + \left(\dfrac{q}{\lambda}+q_{\varphi}\right)\widehat{\varphi} = 0, \label{varphihatPDE}\\ 
&\!\!\partial_{t}\varphi + \langle\nabla_{\bm{x}}\varphi,\bm{f}+\bm{f}_{\varphi}\rangle + \dfrac{1}{2}\langle\bm{\Sigma},\hess_{\bm{x}}\varphi\rangle -  \left(\dfrac{q}{\lambda}+q_{\varphi}\right)\varphi = 0, \label{varphiPDE}\\
&\widehat{\varphi}\left(t_0,\cdot\right)\varphi\left(t_0,\cdot\right)=\rho_0(\cdot), \quad \widehat{\varphi}\left(t_1,\cdot\right)\varphi\left(t_1,\cdot\right)=\rho_1(\cdot), \label{BilinearBC}
\end{align}
\label{BoundaryCoupledForwardBackward}   
\end{subequations}
where
\begin{align}
\bm{f}_{\varphi} &:= \left(\lambda\bm{gg}^{\top}-\bm{\Sigma}\right)\nabla_{\bm{x}}\log\varphi, \label{def_fphi}\\
q_{\varphi} &:= \dfrac{1}{2}\left(\nabla_{\bm{x}}\log\varphi\right)^{\top}\left(\lambda\bm{gg}^{\top}-\bm{\Sigma}\right)\nabla_{\bm{x}}\log\varphi. \label{def_qphi}
\end{align}
The minimizing pair $\left(\rho^{\bm{u}}_{\mathrm{opt}},\bm{u}_{\mathrm{opt}}\right)$ for the CASBP \eqref{CASBP}-\eqref{FeasibleSet} is then recovered from
\begin{align}
\rho^{\bm{u}}_{\mathrm{opt}}(t,\cdot) &= \widehat{\varphi}(t,\cdot)\varphi(t,\cdot),\label{OptimalPDFfromFactors}\\
\bm{u}_{\mathrm{opt}}(t,\cdot) &= \lambda\:\bm{g}^{\top}\nabla_{(\cdot)}\log\varphi(t,\cdot),\label{OptimalControlfromFactors}
\end{align}
for all $t\in[t_0,t_1]$. 

As standard, the pair $\left(\widehat{\varphi},\varphi\right)$ is referred to as the \emph{Schr\"{o}dinger factors} since by \eqref{OptimalPDFfromFactors}, they provide a factorization of $\rho^{\bm{u}}_{\mathrm{opt}}$, the optimally controlled joint state PDF.

\begin{remark}[Control weight matrix]\label{remark:ControlWeightMatrix}
It is easy to see that replacing $\frac{1}{2}\|\bm{u}\|_2^2$ in \eqref{CASBPobj} by $\frac{1}{2}\bm{u}^{\top}\bm{Ru}$ with $\bm{R}\succ\bm{0}$, generalizes the channel mismatch condition \eqref{DifferentChannel} as $\nexists\lambda>0$ such that $\lambda\bm{g}\bm{R}^{-1}\bm{g}^{\top}-\bm{\Sigma}=\bm{0}$. Then the weight matrix in both \eqref{def_fphi} and \eqref{def_qphi} becomes $(\lambda\bm{g}\bm{R}^{-1}\bm{g}^{\top}-\bm{\Sigma})$, and the optimal control in \eqref{OptimalControlfromFactors} becomes $\bm{u}_{\mathrm{opt}}(t,\cdot)=\lambda\bm{R}^{-1}\bm{g}^{\top}\nabla_{(\cdot)}\log\varphi(t,\cdot)$. 
\end{remark}

\begin{remark}[Value of $\lambda>0$]\label{remark:ValueOflambda}
When the channels match, then $\bm{gg}^{\top}\propto\bm{\Sigma}$, and the numerical value of $\lambda>0$, by definition, is the proportionality constant in the relation $\lambda\bm{gg}^{\top}-\bm{\Sigma}=\bm{0}$. When the channels do not match, then in all developments starting from Sec. \ref{subsec:MemoryFromBackwardPass}, we can set $\lambda=1$ without loss of generality. 
\end{remark}


\subsection{The Case \texorpdfstring{$\bm{gg}^{\top}\propto\bm{\Sigma}$: Dynamic Sinkhorn Recursion}{Propto}}\label{subsec:SinkhornForMatchedChannel}
Being a system of nonlinear PDE boundary value problem (BVP), the transformed system \eqref{BoundaryCoupledForwardBackward} at first glance appears to be as difficult as the original system \eqref{FOCO}. However, a closer inspection reveals an interesting structure in \eqref{BoundaryCoupledForwardBackward}. 

When the input and noise channels match, i.e., $\lambda\bm{gg}^{\top}-\bm{\Sigma}=\bm{0}$ for some $\lambda>0$, then both \eqref{def_fphi} and \eqref{def_qphi} vanish, and the PDEs \eqref{varphihatPDE}-\eqref{varphiPDE} become \emph{decoupled and linear}. The only coupling for the BVP then comes from the bilinear boundary conditions \eqref{BilinearBC}. This paves the way for the \emph{dynamic Sinkhorn recursion}:
\begin{align}
\widehat{\varphi}_{0} ~\xrightarrow{\mathcal{F}}~ \widehat{\varphi}_{1} ~\xrightarrow{\rho_1/\widehat{\varphi}_{1}}~ \varphi_{1} ~\xrightarrow{\mathcal{B}}~ \varphi_{0} ~\xrightarrow{\rho_0/\varphi_{0}}~ \left(\widehat{\varphi}_{0}\right)_{\text{next}},
\label{DynamicSinkhornRecursion}   
\end{align}
where
$$\widehat{\varphi}_{i}(\cdot):=\widehat{\varphi}(t=t_i,\cdot), \quad \varphi_{i}(\cdot):=\varphi(t=t_i,\cdot) \quad\forall i\in\{0,1\}.$$
In \eqref{DynamicSinkhornRecursion}, the \emph{forward-in-time map} $\mathcal{F}$ solves a linear PDE initial value problem (IVP) with \eqref{varphihatPDE} from $t_0$ to $t_1$, with initial condition $\widehat{\varphi}_0$. The \emph{backward-in-time map} $\mathcal{B}$ solves a linear PDE IVP with \eqref{varphiPDE} from $t_1$ to $t_0$, with initial condition $\varphi_1$. In both cases, solution is made possible by $\lambda\bm{gg}^{\top}-\bm{\Sigma}=\bm{0}$. 

It is well-known \cite{chen2016entropic} that such dynamic Sinkhorn recursions are contractive with worst-case linear rate-of-convergence with respect to Hilbert's projective metric
$d_{{\mathrm{H}}}$ defined in \eqref{HilbertMetric}. Specifically, the recursion \eqref{DynamicSinkhornRecursion} is guaranteed to converge to a pair $(\widehat{\varphi}_{0},\varphi_{1})$ that is unique in a projectivized sense, thereby so is the pair $\left(\widehat{\varphi}(t,\cdot),\varphi(t,\cdot)\right)$ $\forall t\in[t_0,t_1]$. Here ``unique in a projectivized sense" means unique up to reciprocal scaling by any constant $c>0$, i.e., if $(\widehat{\varphi}_{0},\varphi_{1})$ is a solution for \eqref{DynamicSinkhornRecursion}, then so is $(c\widehat{\varphi}_{0},\varphi_{1}/c)$ for any $c>0$. Thus, the computed pair $\left(c\widehat{\varphi}(t,\cdot),\varphi(t,\cdot)/c\right)$ is unique up to the choice of $c>0$. 

The numerical value of the reciprocal scaling constant $c>0$ for a converged pair depends on the choice of the initial guess in recursion \eqref{DynamicSinkhornRecursion}. However, this choice does not affect the unique computation of the original variables of interest, viz.  $\rho^{\bm{u}}_{\mathrm{opt}}$ and $\bm{u}_{\mathrm{opt}}$, since their recovery from \eqref{OptimalPDFfromFactors}-\eqref{OptimalControlfromFactors} remains unaffected by reciprocal scaling.


\begin{algorithm}[t!]
\caption{Sinkhorn with memory 
}
\begin{algorithmic}[1]
    \Require Numerical tolerance $\varepsilon$, maximum number of iterations $\texttt{maxiter}$, positive initial guesses $\varphi_{\rm{init}}, \widehat{\varphi}_{\rm{init}}$
    \State $\left(\varphi_1,\widehat{\varphi}_{0}\right)\gets\left(\varphi_{\rm{init}},\widehat{\varphi}_{\rm{init}}\right)$ 
    \State $\texttt{idx} \gets 1$ \Comment{ Initialize recursion index}
    \While{($\texttt{err}>\varepsilon$) \textbf{and} $({\texttt{idx}} <  {\texttt{maxiter}})$}
        \State $\varphi_1^{\rm{old}}\gets\varphi_1$
        \State $\widehat\varphi_0^{\rm{old}}\gets\widehat\varphi_0$
        \State $\varphi_0  \gets \mathcal{B}(\varphi_1)$ \hspace*{-1in} \Comment{Solve the backward PDE IVP \eqref{varphiPDE}}
        \State $\varphi \gets (\varphi_{t})_{t\in[0,1]}$ \Comment{Store history}
        \State $\widehat{\varphi}_0 \gets \rho_0 / \varphi_0$
        \State $\widehat{\varphi}_1 \gets \mathcal{F}_{\varphi}(\widehat{\varphi}_0)$ \Comment{Solve the forward PDE IVP \eqref{varphihatPDE}}
        \State ${\varphi}_1 \gets \rho_1 / \widehat{\varphi}_1$
        \State $\texttt{err}\gets\max\{d_{\mathrm{H}}(\widehat\varphi_0,\widehat\varphi_0^{\rm{old}}),d_{\mathrm{H}}(\varphi_1,\varphi_1^{\rm{old}})\}$
        \State $\texttt{idx}\gets \texttt{idx}+1$    
    \EndWhile
    \State $\rho^{\bm{u}}_{\mathrm{opt}}\gets\widehat{\varphi}\cdot\varphi$
    \State $\bm{u}_{\mathrm{opt}}\gets\lambda {\bm{g}}^\top\nabla\log\varphi$
\end{algorithmic}
{\bf{Result:}} Optimal solution $\rho^{\bm{u}}_{\mathrm{opt}}$, $\bm{u}_{\mathrm{opt}}$
\label{alg:memhorn}
\end{algorithm}


\subsection{The Case \texorpdfstring{$\bm{gg}^{\top}\not\propto\bm{\Sigma}$}{DifferentChannel}: Dynamic Sinkhorn Recursion with Memory From the Backward Pass}\label{subsec:MemoryFromBackwardPass}

When $\lambda\bm{gg}^{\top}-\bm{\Sigma}\neq\bm{0}$, the terms \eqref{def_fphi} and \eqref{def_qphi} are nonzero in general, and the recursion \eqref{DynamicSinkhornRecursion} does not apply because then \eqref{varphihatPDE}-\eqref{varphiPDE} are neither linear nor decoupled. 

To make algorithmic headway in this more general case, we notice that the BVP \eqref{BoundaryCoupledForwardBackward} still has some structure. In particular, the PDE \eqref{varphiPDE} is nonlinear yet decoupled from \eqref{varphihatPDE}, i.e., the coupling among \eqref{varphihatPDE}-\eqref{varphiPDE} is one way via the additional terms $\bm{f}_{\varphi},q_{\varphi}$. This motivates our idea of designing a variant of Sinkhorn recursion where starting with an initial guess $\varphi_1$, one could backward integrate \eqref{varphiPDE}, i.e., evaluate the map $\mathcal{B}$ in \eqref{DynamicSinkhornRecursion}, and store the intermediates $\left(\varphi_t\right)_{t\in[t_0,t_1]}$ in a buffer/temporary memory to be accessed for evaluating the map $\mathcal{F}$ in \eqref{DynamicSinkhornRecursion} that follows. Here and in the sequel, we follow the notational convention that $\varphi_t (\cdot):= \varphi(t,\cdot)$ $\forall t\in[t_0,t_1]$.

In other words, for solving the IVP with \eqref{varphihatPDE}, the most recent $\left(\varphi_t\right)_{t\in[t_0,t_1]}$ from the backward pass could be substituted in \eqref{varphihatPDE}. Then the resulting \emph{linear} reaction-advection-diffusion PDE solution can be marched forward in time. This changes the standard Sinkhorn recursion \eqref{DynamicSinkhornRecursion} to the new variant:
\begin{align}
\includegraphics[width=0.89\linewidth, valign=c]{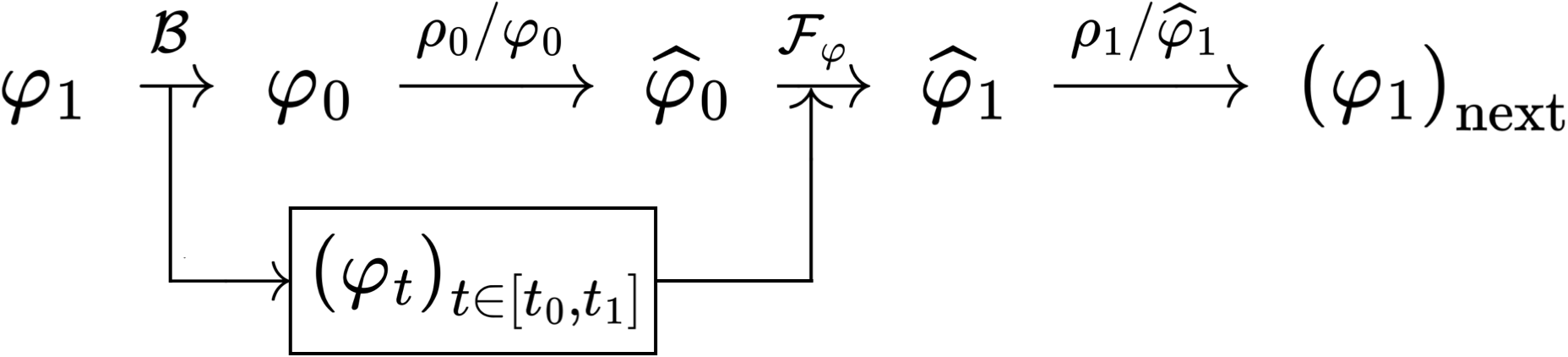}
\label{Memhorn}
\end{align} 
where the box depicts the buffer/temporary memory. In \eqref{Memhorn}, the subscript of the forward-in-time map $\mathcal{F}_{\varphi}$ emphasizes its dependence on the most recent history of the backward pass.

We refer to the recursion \eqref{Memhorn} as ``Sinkhorn with memory''. After each epoch, the buffer is flushed, and the $\left(\varphi_t\right)_{t\in[t_0,t_1]}$ is overwritten with the updated evaluation of $\mathcal{B}$. In the following, we concretize this idea into an algorithm.

\subsection{Proposed Algorithm}\label{subsec:ProposedAlgorithm}
Building on the ideas from the previous subsection, we propose Algorithm \ref{alg:memhorn} (Sinkhorn with memory) to solve the generic CASBP \eqref{CASBP}-\eqref{FeasibleSet} with input and noise channel mismatch.

Given the CASBP data $q,\bm{f},\bm{g},\bm{\sigma},\rho_0,\rho_1$, as in standard Sinkhorn recursion, Algorithm \ref{alg:memhorn} takes three additional inputs: numerical tolerance $\varepsilon$, maximum number of iterations $\texttt{maxiter}$, and initial guess $\varphi_{\rm{init}}$ (everywhere positive).

\begin{remark}[Symmetry-breaking]\label{remark:Asymmetry}
Unlike the standard dynamic Sinkhorn recursion \eqref{DynamicSinkhornRecursion}, there is an asymmetry in the proposed variant \eqref{Memhorn} in the sense that the proposed recursion must start from the initial guess $\varphi_1$ (the $\varphi_{\rm{init}}$ in Algorithm \ref{alg:memhorn}). In contrast, the recursion \eqref{DynamicSinkhornRecursion} can either start from the initial guess $\widehat{\varphi}_0$ or from $\varphi_1$. This symmetry-breaking originates from that \eqref{varphiPDE} does not depend on $\widehat{\varphi}(t,\cdot)$ while \eqref{varphihatPDE} depends on both $\widehat{\varphi}(t,\cdot),\varphi(t,\cdot)$. When the channels match, \eqref{varphihatPDE}-\eqref{varphiPDE} become equation-level decoupled, i.e., then they are only coupled through the boundary condition \eqref{BilinearBC}, thereby allowing the initial guess to be either $\widehat{\varphi}_0$ or $\varphi_1$.
\end{remark}

\noindent In the next Section, we focus on the analysis of Algorithm \ref{alg:memhorn}.


\section{Analysis of Dynamic Sinkhorn Recursion with Memory from the Backward Pass}\label{sec:ConvergenceAnalysis}

Given an initial guess, say $\varphi_{\rm init} =\varphi_1^{(1)}$, Algorithm \ref{alg:memhorn} is functionally a recursion of the form
\begin{align}
\varphi_1^{(j+1)} = \mathcal{T}_j(\varphi_1^{(j)})\:, \quad j=1,\hdots, \texttt{maxiter},
\label{OverallRecursionInVarphi1}    
\end{align} 
where each mapping $\mathcal{T}_j:K_{+}\to K_{+}$, with $K_+$ as in \eqref{defConeInterior}. 

Unlike the standard dynamic Sinkhorn recursion \eqref{DynamicSinkhornRecursion}, the map $\mathcal T_j$ is \emph{not fixed between iterations}, which is why we adjoin a subscript $j$ indicating the iteration index. Indeed, $\mathcal F_\varphi$ in \eqref{Memhorn} is dependent on the most recent trajectory $\varphi:=(\varphi_t)_{t\in [0,1]}$, which changes from an iteration to the next. 

For convenience, we introduce the following notations for the spatial differential operators corresponding to the backward and forward PDEs \eqref{varphiPDE} and \eqref{varphihatPDE}, respectively:
\begin{align}
\!\!\! \overleftarrow{\mathcal{L}}[\varphi] &:= -\langle\nabla_{\bm{x}}\varphi,\bm{f}+\bm{f}_{\varphi}\rangle - \dfrac{1}{2}\langle\bm{\Sigma},\hess_{\bm{x}}\varphi\rangle + \left(\dfrac{q}{\lambda}+q_{\varphi}\right)\varphi,
    \label{eq:L_backward} \\
\!\!\! \overrightarrow{\mathcal{L}}_\varphi[\widehat{\varphi}] &:= -\nabla_{\bm{x}}\cdot\left(\widehat{\varphi}\left(\bm{f} +\bm{f}_{\varphi}\right)\right) + \dfrac{1}{2}\Delta_{\bm{\Sigma}}\:\widehat{\varphi} - \left(\dfrac{q}{\lambda}+q_{\varphi}\right)\widehat{\varphi},
    \label{eq:L_forward}
\end{align}
where $ \overrightarrow{\mathcal{L}}_\varphi[\widehat{\varphi}]$ is meant as the operator for a fixed $\varphi$.

We also define the division operators 
\begin{align}
\mathcal{D}_i(\cdot) := \rho_i /(\cdot),  \quad i\in\{0,1\}.
\end{align}
For a fixed trajectory $\varphi$, we define a composite map $\mathcal T_\varphi$ as
\begin{align}
    \mathcal T_\varphi (\varphi_1) &:= \mathcal D_1(\mathcal F_\varphi(\mathcal D_0(\mathcal B(\varphi_1)))).
\end{align}  
Thus, at a fixed iteration index $j\in\{1,\hdots, \texttt{maxiter}\}$, 
\begin{align*}
\mathcal T_j = \mathcal T_{\varphi^{(j)}}.
\end{align*}

Standard dynamic Sinkhorn recursions, such as \eqref{DynamicSinkhornRecursion},  are proven \cite{chen2016entropic} to be globally contractive (Definition \ref{def:contractionratio})
with respect to Hilbert's projective metric $d_{\mathrm{H}}(\cdot, \cdot)$. Global contractivity leads to global convergence of iterations to a unique fixed point as highlighted earlier in Sec.~\ref {subsec:SinkhornForMatchedChannel}. It is then natural to ask what performance guarantee can be established for Algorithm \ref{alg:memhorn}. To address this question, we investigate the nature of the constituent maps of $\mathcal{T}_j$ to analyze the proposed algorithm with respect to $d_{\mathrm{H}}(\cdot, \cdot)$.

For the purpose of analyzing Algorithm \ref{alg:memhorn}, we will consider a compact spatial domain $D\subset\mathbb{R}^{n}$. The roadmap for our analysis is as follows. In Sec. \ref{subsec:global}, we discuss non-expansiveness for the maps $\mathcal{D}_0,\mathcal{D}_1,\mathcal B$. In Sec. \ref{subsec:local}, we establish sufficient conditions to ensure the map $\mathcal{F}_{\varphi}$ is locally contractive. In Sec. \ref{subsec:OverallLocalStability}, we bring these results together to prove local stability for Algorithm \ref{alg:memhorn} (Theorem \ref{thm:main}).

\subsection{Nonexpansiveness of $\mathcal{D}_0,\mathcal{D}_1,\mathcal B$} \label{subsec:global}
Using Definition~\ref{def:H-alt}, we note that for $i \in \{0,1\}$ and for all $u,v \in K_+$,
\begin{align*}
    d_{\rm H} (\mathcal{D}_i(u), \mathcal D_i(v)) &=  \!\log \sup_{\bm{x}} \frac{\rho_i(\bm{x})/u(\bm{x})}{\rho_i(\bm{x})/ v(\bm{x})} \!- \!\log \inf_{\bm{x}} \frac{\rho_i(\bm{x})/u(\bm{x})}{\rho_i(\bm{x})/ v(\bm{x})} \\
    &= d_{\rm H} (u, v).
\end{align*}
Hence, the division maps $\mathcal D_0$ and $\mathcal D_1$ are in fact isometries with respect to $d_{\rm H}(\cdot, \cdot)$. 

The next result establishes that the map $\mathcal{B}$ is non-expansive under a sufficient condition. 
\begin{proposition}[Non-expansiveness of $\mathcal{B}$]\label{prop:Bnonexpansive}
  If $q_{\varphi} + q/\lambda \geq 0$, then the backward-in-time map $\mathcal B: K_+ \to K_+$ is globally nonexpansive with respect to the Hilbert metric. That is, there exists a constant $0 \leq \kappa_{\mathcal B} \leq 1$ such that
    \begin{align}
        d_{\rm H} (\mathcal B(u), \mathcal B(v)) \leq \kappa_{\mathcal B}  d_{\rm H} (u, v) \quad \forall u, v \in K_+.
    \end{align}   
\end{proposition}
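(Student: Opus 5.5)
The plan is to apply the Birkhoff--Bushell Theorem \ref{thm:BBtheorem} to $\mathcal B$. This requires three facts: $\mathcal B$ maps $K_+$ into $K_+$, it is monotone increasing, and it is homogeneous of degree $p=1$. Once these are in hand, Theorem \ref{thm:BBtheorem} gives $\kappa_{\rm H}(\mathcal B)\le 1$, which is exactly the claimed nonexpansiveness with $\kappa_{\mathcal B}\le 1$.

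\emph{Homogeneity.} We check directly that if $\varphi$ solves the backward PDE \eqref{varphiPDE} with terminal datum $\varphi_1$, then $\alpha\varphi$ solves it with datum $\alpha\varphi_1$ for every $\alpha>0$. The point is that the nonlinearity enters only through $\nabla_{\bm{x}}\log\varphi$, which is invariant under $\varphi\mapsto\alpha\varphi$. Hence $\bm f_{\alpha\varphi}=\bm f_\varphi$ and $q_{\alpha\varphi}=q_\varphi$ by \eqref{def_fphi}--\eqref{def_qphi}. The remaining terms of $\overleftarrow{\mathcal L}[\alpha\varphi]$ in \eqref{eq:L_backward} are linear in $\varphi$, so $\overleftarrow{\mathcal L}[\alpha\varphi]=\alpha\overleftarrow{\mathcal L}[\varphi]$. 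Invoking uniqueness of the IVP solution then gives $\mathcal B(\alpha u)=\alpha\mathcal B(u)$.

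\emph{Monotonicity and positivity.} We reverse time, $s:=t_0+t_1-t$, so that \eqref{varphiPDE} becomes a forward parabolic equation of the form \eqref{eq:QuasilinearParabolicPDE}, with initial datum at $s=t_0$. Writing $\langle\nabla\varphi,\bm f_\varphi\rangle = \nabla\varphi^\top(\lambda\bm{gg}^\top-\bm\Sigma)\nabla\varphi/\varphi$ and $q_\varphi\varphi=\tfrac12\nabla\varphi^\top(\lambda\bm{gg}^\top-\bm\Sigma)\nabla\varphi/\varphi$, the equation reads
\[
-\partial_s\varphi+\tfrac12\langle\bm\Sigma,\hess_{\bm x}\varphi\rangle+b(s,\bm x,\varphi,\nabla\varphi)=0,
\]
\[
b:=\langle\nabla\varphi,\bm f\rangle+\tfrac{1}{2\varphi}\nabla\varphi^\top(\lambda\bm{gg}^\top-\bm\Sigma)\nabla\varphi-\tfrac{q}{\lambda}\varphi .
\]
The diffusion coefficients $a_{ij}=\tfrac12\bm\Sigma_{ij}$ do not depend on $\varphi$, and they are uniformly elliptic by \ref{A2}.

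The step I expect to be the main obstacle is verifying that $b$ is decreasing in $u=\varphi$ at fixed gradient. We have
\[
\partial_u b=-\tfrac{1}{2u^2}\nabla\varphi^\top(\lambda\bm{gg}^\top-\bm\Sigma)\nabla\varphi-\tfrac q\lambda=-\Bigl(q_\varphi+\tfrac q\lambda\Bigr),
\]
evaluated along the solution, and this is $\le 0$ precisely under the hypothesis $q_\varphi+q/\lambda\ge0$. This is where the hypothesis is used, and I would state carefully that it is assumed along the relevant solutions. Theorem \ref{thm:comparisonprinciple} then gives that $u\preceq v$ at the terminal time implies $\mathcal B(u)\preceq\mathcal B(v)$. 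For the parabolic boundary $(t_0,t_1)\times\partial D$ of the compact domain, we assume the boundary data are handled compatibly, for instance by the same boundary condition for both solutions, so that the ordering holds there as well.

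Positivity follows from the same two facts. If $u\in K_+$, then $cu_{\min}\preceq u\preceq u_{\max}$ with $u_{\min}:=\inf u>0$. Comparison together with homogeneity sandwiches $\mathcal B(u)$ between $\mathcal B$ of constant data, and these are positive because $\varphi=\exp(S/\lambda)>0$. Hence $\mathcal B(u)\in K_+$.
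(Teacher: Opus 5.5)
Your proposal is correct and follows the paper's proof essentially step for step. You get $1$-homogeneity from the scale invariance of $\nabla_{\bm x}\log\varphi$, monotonicity from the comparison principle (Theorem~\ref{thm:comparisonprinciple}) applied to the time-reversed equation with $\partial_\eta b=-(q_\eta+q/\lambda)$, and then Birkhoff--Bushell with $p=1$; your extra remarks on the lateral boundary $(t_0,t_1)\times\partial D$ and on $\mathcal B(K_+)\subseteq K_+$ cover points the paper leaves implicit.
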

\begin{proof}
Our proof strategy is to verify that the map $\mathcal B$ is $1$-homogeneous (Definition~\ref{def:homogeneity}) and monotone increasing. We then invoke the Birkhoff-Bushell theorem (Theorem~\ref{thm:BBtheorem} in Sec. \ref{subsec:ConePrelim}) that establishes nonexpansiveness of $\mathcal {B}$.


For $\alpha>0$, let
\begin{align*}
    \bm{f}_{\alpha\varphi} &:= \left(\lambda\bm{gg}^{\top}-\bm{\Sigma}\right)\nabla_{\bm{x}}\log(\alpha\varphi), \\
    q_{\alpha\varphi} &:= \dfrac{1}{2}\left(\nabla_{\bm{x}}\log (\alpha \varphi)\right)^{\top}\left(\lambda\bm{gg}^{\top}-\bm{\Sigma}\right)\nabla_{\bm{x}}\log(\alpha\varphi).
\end{align*}
Since $\nabla_{\bm{x}}\log(\alpha\varphi) = \nabla_{\bm{x}}\log \varphi$, direct computation gives
   \begin{align*}
       \bm{f}_{\alpha\varphi} =  \bm{f}_\varphi, \quad q_{\alpha\varphi} =  q_{\varphi}.
   \end{align*}
Thus, from \eqref{eq:L_backward},
    \begin{align*}
        &\overleftarrow{\mathcal{L}}[\alpha\varphi]\\
        &= -\alpha\langle\nabla_{\bm{x}}\varphi,\bm{f}+\bm{f}_{\varphi}\rangle - \dfrac{\alpha}{2}\langle\bm{\Sigma},\hess_{\bm{x}}\varphi\rangle + \left(\dfrac{q}{\lambda}+q_{\varphi}\right)(\alpha\varphi) \\
        &= \alpha\overleftarrow{\mathcal{L}}[\varphi].
    \end{align*}
    That is, the differential operator that corresponds to $\mathcal {B}$, $\overleftarrow{\mathcal{L}}[\alpha\varphi]$, is homogeneous of degree $1$. Therefore, if $(\varphi_t)_{t\in[t_0,t_1]}$ is the solution to the backward PDE \eqref{varphiPDE} subject to $\varphi_1$, then 
    $\alpha (\varphi_t)_{t \in [t_0,t_1]}$ is also the solution to the backward PDE \eqref{varphiPDE} subject to $\alpha \varphi_1$. In turn,
    \begin{align*}
        \mathcal B (\alpha \varphi_1) = \alpha \mathcal B(\varphi_1)
    \end{align*}
    as $\mathcal B(\varphi_1) = \varphi_0$, establishing that $\mathcal B$ is $1$-homogeneous.
    
To prove the monotonicity of $\mathcal{B}$, we make use of the comparison principle for quasilinear parabolic PDEs (Theorem~\ref{thm:comparisonprinciple}). To this end, we define the operator
\begin{align*}
    P[\eta] := -\partial_t \eta-\overleftarrow{\mathcal{L}}[\eta],
\end{align*}
for $\eta \in K_+$.
Using \eqref{eq:L_backward}, $P[\eta]$ can be expressed as
\begin{align} \label{eq:parabolic operator}
     P[\eta] &=  -\partial_t \eta+\sum_{i,j=1}^{n}a_{ij}(t,\bm{x},\eta,\nabla_{\bm{x}} \eta)\partial_{x_{i} x_{j}}\eta(t,\bm{x}) \nonumber \\
    &\qquad+ b(t,\bm{x},\eta,\nabla_{\bm{x}}\eta),
\end{align}
where
\begin{align*}
    a_{ij}(t,\bm{x},\eta,\nabla_{\bm{x}} \eta) &= \frac{1}{2}\Sigma_{i,j}, \\
    b(t,\bm{x},\eta,\nabla_{\bm{x}}\eta) &= \langle\nabla_{\bm{x}} \eta,\bm{f}+\bm{f}_{\eta}\rangle - \left(\dfrac{q}{\lambda}+q_{\eta}\right)\eta,
\end{align*}
with \begin{align*}
    \bm{f}_{\eta} &:= \left(\lambda\bm{gg}^{\top}-\bm{\Sigma}\right)\nabla_{\bm{x}}\log \eta,\\
    q_{\eta} &:= \dfrac{1}{2}\left(\nabla_{\bm{x}}\log \eta \right)^{\top}\left(\lambda\bm{gg}^{\top}-\bm{\Sigma}\right)\nabla_{\bm{x}}\log \eta.
\end{align*}

Note that the coefficients $a_{ij}$ are independent of $\eta$. Further,
\begin{align}
    \partial_\eta{b} &= \partial_\eta\bigg[\langle\nabla_{\bm{x}} \eta,\bm{f}+\bm{f}_{\eta} \rangle - \left(\dfrac{q}{\lambda}+q_{\eta}\right)\eta\bigg] \nonumber\\
    &= -\frac{1}{2}\frac{(\nabla_{\bm{x}} \eta)^\top\left(\lambda\bm{gg}^{\top}-\bm{\Sigma}\right)(\nabla_{\bm{x}} \eta)}{\eta^2} - \dfrac{q}{\lambda}\nonumber\\
    &= -q_{\eta}-\dfrac{q}{\lambda},\label{BNonexpansiveSufficient}
\end{align}
so $b$ is decreasing in $\eta$ if $q_{\eta}+ q/\lambda\geq 0$. In this case, the comparison principle in Theorem \ref{thm:comparisonprinciple} applies to \eqref{eq:parabolic operator}.


Accordingly, we take $\eta(t,\cdot)=\varphi_{t_1-t}$ and $\zeta(t,\cdot)=\varphi_{t_1-t}'$ to be the solutions of the time reversal of \eqref{varphiPDE}, governed by the parabolic operators defined similarly to that in  \eqref{eq:parabolic operator} and subject to initial conditions $\eta(t_0, \cdot) =\varphi_1(\cdot)$ and $\zeta(t_0, \cdot) = \varphi_1'(\cdot)$, respectively.
If $\varphi_1 \leq \varphi_1'$, then by the comparison principle,
\begin{align*}
    \varphi_{t_1-t} = \eta(t,\cdot) \leq \zeta(t,\cdot) = \varphi_{t_1-t}' \quad\forall t\in[t_0,t_1].
    \end{align*}
    As $\mathcal{B}(\varphi_{1})=\eta(t_1,\cdot)$ and $\mathcal{B}(\varphi_{1}')=\zeta(t_1,\cdot)$, it follows that $\mathcal B$ is monotone increasing.

Having shown that $\mathcal B$ is $1$-homogeneous and monotone increasing, by Theorem~\ref{thm:BBtheorem}), we conclude that $\mathcal {B}$ is nonexpansive with respect to $d_{\mathrm{H}}(\cdot, \cdot)$.
\end{proof}

The following corollary offers a simpler sufficient condition in terms of the problem data.

\begin{corollary}[Positive semi-definiteness of $\lambda\bm{gg}^{\top}-\bm{\Sigma}$]\label{corr:NegDefWeightMatrix}
If $\lambda\bm{gg}^{\top}-\bm{\Sigma}\succeq\bm{0}$, then $q_{\varphi}+ q/\lambda \geq 0$, and Proposition \ref{prop:Bnonexpansive} applies.
\end{corollary}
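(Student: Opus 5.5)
The plan is a direct pointwise verification. The claim splits into two nonnegativity statements: $q_{\varphi}\geq 0$, which uses the matrix hypothesis, and $q/\lambda\geq 0$, which uses the standing assumptions. Their sum is then nonnegative, and Proposition \ref{prop:Bnonexpansive} can be invoked with its hypothesis satisfied.

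\textbf{Step 1: $q_{\varphi}\geq 0$.} Fix $(t,\bm{x})$ in the spatio-temporal domain and set $\bm{v}:=\nabla_{\bm{x}}\log\varphi(t,\bm{x})\in\mathbb{R}^n$. This vector is well defined because $\varphi\in K_+$ is strictly positive and, by the regularity in the first-order conditions, $\varphi$ is $\mathscr{C}^{1,2}$. By \eqref{def_qphi},
$$q_{\varphi}(t,\bm{x}) = \tfrac{1}{2}\,\bm{v}^{\top}\left(\lambda\bm{gg}^{\top}-\bm{\Sigma}\right)(t,\bm{x})\,\bm{v}.$$
The hypothesis $\lambda\bm{gg}^{\top}-\bm{\Sigma}\succeq\bm{0}$ at every $(t,\bm{x})$ is exactly the statement that this quadratic form is nonnegative for every vector. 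Hence $q_{\varphi}\geq 0$ pointwise, for any positive $\varphi$.

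\textbf{Step 2: $q/\lambda\geq 0$.} This follows from the standing assumptions $q\geq 0$ and $\lambda>0$.

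\textbf{Step 3: conclusion.} Adding the two bounds gives $q_{\varphi}+q/\lambda\geq 0$, which is the hypothesis of Proposition \ref{prop:Bnonexpansive}. The conclusion of the proposition, nonexpansiveness of $\mathcal{B}$, then follows.

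One point needs care. In the proof of Proposition \ref{prop:Bnonexpansive}, the sign condition is used as $\partial_\eta b=-q_\eta-q/\lambda\leq 0$ for an arbitrary positive argument $\eta$ of the comparison principle, not only for a particular solution $\varphi$. Step 1 gives $q_\eta\geq 0$ for every positive $\eta$, since it never uses any property of $\varphi$ beyond positivity. So the corollary's condition is uniform and supplies exactly what the comparison argument needs.

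There is no real obstacle beyond this observation. The only thing to check is that the condition holds for all positive $\eta$, which the data-level matrix hypothesis guarantees automatically.
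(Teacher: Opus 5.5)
Your proposal is correct and follows the paper's argument: $\lambda\bm{gg}^{\top}-\bm{\Sigma}\succeq\bm{0}$ makes the quadratic form $q_\eta$ nonnegative, and together with $q\geq 0$ and $\lambda>0$ this gives $\partial_\eta b=-q_\eta-q/\lambda\leq 0$ in \eqref{BNonexpansiveSufficient}, so Proposition~\ref{prop:Bnonexpansive} applies. Your observation that this holds uniformly for every positive $\eta$, not just the particular solution $\varphi$, is exactly what the comparison-principle step needs, and the paper uses it implicitly by working directly with \eqref{BNonexpansiveSufficient}.
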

\begin{proof}
Since the state cost $q\geq 0$, the expression \eqref{BNonexpansiveSufficient} is $\leq 0$ whenever $\lambda\bm{gg}^{\top}-\bm{\Sigma}\succeq\bm{0}$. In the absence of state cost ($q=0$), the conditions $q_{\varphi}+ q/\lambda\geq 0$ and $\lambda\bm{gg}^{\top}-\bm{\Sigma}\succeq\bm{0}$ are equivalent.  
\end{proof}

\subsection{The Forward Map $\mathcal{F}_\varphi$}\label{subsec:local}
Here, we derive sufficient conditions under which the forward mapping $\mathcal F_\varphi$ is locally contractive with respect to $d_{\rm H}(\cdot, \cdot)$ up to an additive residual. 

In contrast to the backward-in-time map $\mathcal{B}$, the map $\mathcal{F}_\varphi$ intrinsically depends on the trajectory $\varphi$ generated from the current iteration. Therefore, when we compare, say $\mathcal F_\varphi(\cdot)$ and $\mathcal F_{\varphi'}(\cdot)$, we need to account for the change in inputs of these maps, together with the change in the trajectories $\varphi, \varphi'$ that underlie their construction. This is formalized in the following triangle inequality:
\begin{align}
    d_{\mathrm{H}}(\mathcal{F}_{\varphi}(u),&\mathcal{F}_{\varphi'}(v)) \leq  d_{\mathrm{H}}(\mathcal{F}_{\varphi}(u),\mathcal{F}_{\varphi}(v)) \nonumber \\
    & + d_{\mathrm{H}}(\mathcal{F}_{\varphi}(v),\mathcal{F}_{\varphi'}(v)) \quad \forall u, v \in K_+.
    \label{eq:hilbertFtriangle}
\end{align}
We provide two complementary statements (Propositions~\ref{prop:Fvarphi_contractive} and \ref{prop:bound_dH-Fs}), each addressing how the forward mapping changes when its two dependencies change separately. Based on them, we point out how the mapping $\mathcal F_\varphi$  behaves locally under changing of both dependencies in Proposition~\ref{prop:combined}.

\begin{proposition}[Contraction of $\mathcal{F}_{\varphi}$ with $\varphi$ fixed]\label{prop:Fvarphi_contractive}
    For $\varphi$ fixed, the forward-in-time map $\mathcal{F}_\varphi$ is globally contractive in the Hilbert metric. 
\end{proposition}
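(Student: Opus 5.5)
With $\varphi=(\varphi_t)_{t\in[t_0,t_1]}$ frozen, the forward PDE \eqref{varphihatPDE} becomes a \emph{linear} reaction--advection--diffusion equation in $\widehat{\varphi}$: $\partial_t\widehat{\varphi}=\overrightarrow{\mathcal{L}}_\varphi[\widehat{\varphi}]$. The drift $\bm{f}+\bm{f}_\varphi$ and the reaction coefficient $c:=q/\lambda+q_\varphi$ are then fixed functions of $(t,\bm{x})$, and $\mathcal F_\varphi$ is a positive linear map. The plan is to use the linear case of the Birkhoff--Bushell theorem (Theorem~\ref{thm:BBtheorem}), which reduces global contractivity to showing ${\mathrm{ProjDiam}}(\mathcal F_\varphi)<\infty$.

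First, I will represent $\mathcal F_\varphi$ by its fundamental solution. Assuming enough regularity of $\varphi$ on the compact domain $D$ that $\bm{f}_\varphi$ and $q_\varphi$ are bounded and smooth, the uniform ellipticity \ref{A2} makes the operator uniformly parabolic. Hence there is a kernel $p_\varphi(t_0,\bm{y};t_1,\bm{x})$ with
\begin{align*}
\mathcal F_\varphi(u)(\bm{x})=\int_D p_\varphi(t_0,\bm{y};t_1,\bm{x})\,u(\bm{y})\,\mathrm{d}\bm{y}.
\end{align*}
Positivity and linearity are immediate from this formula, and positivity of the kernel also follows from the comparison principle (Theorem~\ref{thm:comparisonprinciple}) applied to the linear equation. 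Note that the Feynman--Kac factor $\exp(-\int c)$ stays positive whatever the sign of $c$, since $c$ is bounded; so no sign condition on $q_\varphi$ is needed here, unlike in Proposition~\ref{prop:Bnonexpansive}.

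Second, I will invoke two-sided Aronson-type Gaussian bounds for the kernel. On the compact set $D\times D$ with the fixed horizon $t_1-t_0>0$, they yield constants $0<m\le M<\infty$, depending on $\varphi$ and the data, such that $m\le p_\varphi(t_0,\bm{y};t_1,\bm{x})\le M$ for all $\bm{x},\bm{y}\in D$. Then for any $u\in K_+$, writing $\|u\|_1=\int_D u$, we get $m\|u\|_1\le \mathcal F_\varphi(u)(\bm{x})\le M\|u\|_1$. Taking $v\equiv 1$ in \eqref{eq:proj-diam-ineq}, this gives $d_{\rm H}(\mathcal F_\varphi(u),1)\le\log(M/m)$ uniformly in $u$, so ${\mathrm{ProjDiam}}(\mathcal F_\varphi)\le 2\log(M/m)<\infty$. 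Theorem~\ref{thm:BBtheorem} then gives $\kappa_{\rm H}(\mathcal F_\varphi)\le\tanh\left(\tfrac12\log(M/m)\right)<1$, which is global contractivity.

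The main obstacle is the second step: justifying the uniform positive lower bound $m$ on the kernel. This requires the frozen coefficients $\bm{f}_\varphi$ and $q_\varphi$, which involve $\nabla\log\varphi$, to be bounded. That in turn needs $\varphi_t$ bounded away from zero with bounded gradient on $D$. I would obtain this from the backward pass, using positivity of $\varphi_1$, compactness of $D$, and parabolic regularity. I would also need to fix boundary conditions on $\partial D$, for example no-flux, so that mass is not lost and the Harnack inequality or heat-kernel lower bound holds up to the boundary. If the Gaussian lower bound is awkward near $\partial D$, I would instead use the parabolic Harnack inequality on a chain of balls covering $D$ to get the comparability $\sup_{\bm{x}}\mathcal F_\varphi(u)\le C\inf_{\bm{x}}\mathcal F_\varphi(u)$ directly, which bounds $d_{\rm H}(\mathcal F_\varphi(u),1)$ by $\log C$.
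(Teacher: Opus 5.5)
Your proposal is correct and follows the paper's proof essentially step for step: you represent the frozen-$\varphi$ linear equation by its fundamental solution, and Aronson's two-sided Gaussian bounds give $0<m\le\Gamma_\varphi\le M$ on $D\times D$. From there you get ${\mathrm{ProjDiam}}(\mathcal F_\varphi)\le 2\log(M/m)$ via \eqref{eq:proj-diam-ineq} with $v\equiv 1$, and the linear Birkhoff--Bushell estimate $\kappa_{\rm H}(\mathcal F_\varphi)\le\tanh(\tfrac12\log(M/m))<1$. The regularity points you flag (boundedness of $\nabla\log\varphi$, behavior at $\partial D$) are what the paper takes for granted by citing \ref{A1}--\ref{A3} and Lemma~1 of \cite{teter2025hopf}, so your version states the needed hypotheses more explicitly than the paper does.
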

\begin{proof}
With $\varphi$ fixed, \eqref{varphihatPDE} is a linear second-order parabolic PDE with differential operator $\overrightarrow{\mathcal{L}}_{\varphi}[\widehat{\varphi}]$ given as in \eqref{eq:L_forward}.
Applying Lemma~1 in \cite{teter2025hopf}, we verify that the operator coefficients
\begin{align*}
    \nabla_{\bm{x}}\cdot\bm{\Sigma}-(\bm{f}&+\bm{f}_\varphi), \\
      \frac{1}{2}\langle{\rm{Hess}}_{\bm{x}},\bm{\Sigma}\rangle &- \nabla_{\bm{x}}\cdot(\bm{f}+\bm{f}_\varphi) - \left(\frac{q}{\lambda} + q_\varphi\right)
\end{align*}
are smooth and bounded ({{due to \ref{A1}-\ref{A3}}}).

Then, for fixed $\varphi$, \eqref{varphihatPDE} is a linear parabolic PDE with smooth and bounded coefficients, and thereby, it admits \cite{aronson1968non,aronson1971non} a classical fundamental solution $\Gamma_\varphi(t,\bm{x},s,\bm{y})$. As such, given an initial condition $\widehat{\varphi}_{0}$ supported on a compact $D\subset\mathbb{R}^{n}$,
\begin{equation}
\widehat{\varphi}_{t}(\bm{x)} = \int_D \Gamma_\varphi(t,\bm{x},t_0,\bm{y})\widehat{\varphi}_{0}(\bm{y})\differential\bm{y}. 
\label{eq:def_U} 
\end{equation}
In particular,
\begin{align}
    \mathcal{F}_{\varphi}(\widehat{\varphi}_{0})(\bm{x}) = \widehat{\varphi}_{1}(\bm{x}) = \int_D \Gamma_\varphi(t_1,\bm{x},t_0,\bm{y})\widehat{\varphi}_{0}(\bm{y})\differential\bm{y}.
\label{GreenFunctionIntegral}    
\end{align}

Theorem 7 in \cite{aronson1968non} establishes that there exist finite constants $C, \alpha_1, \alpha_2 >0$ such that
\begin{align}
  \!\!  C^{-1}  \psi_1(t_1-t_0&,\bm{x}-\bm{y})  \leq \nonumber \\
  &\Gamma_\varphi(t_1,\bm{x},t_0,\bm{y})\nonumber\\
  &\leq C \psi_2(t_1-t_0,\bm{x}-\bm{y}) \quad \forall\bm{x},\bm{y} \in D,
\end{align}
where $\psi_i(t,\bm{x})$ is the fundamental solution of the heat equation 
\begin{align*}
    \partial_t \psi_i=\alpha_i \Delta \psi_i \quad i\in\{1,2\}.
\end{align*}
Let 
\begin{align*}
    m &:=  \inf_{\bm{x},\bm{y} \in D} C^{-1}  \psi_1(t_1-t_0,\bm{x}-\bm{y}), \\
    M &:= \sup_{\bm{x},\bm{y} \in D} C \psi_2(t_1-t_0,\bm{x}-\bm{y}).
\end{align*}
Then,
\begin{align}
    m  \leq \Gamma_\varphi(t_1,\bm{x},t_0,\bm{y}) \leq M \quad \forall \bm{x},\bm{y} \in D.
    \label{eq:Gammabound}
\end{align}
Since $0<\psi_i<\infty$ $\forall i\in\{1,2\}$, we have that $0<m\leq M<\infty$, constituting uniform bounds on the fundamental solution $\Gamma_\varphi(t_1,\bm{x},t_0,\bm{y})$.

For any $u\in K_+$, by \eqref{GreenFunctionIntegral} and \eqref{eq:Gammabound},
      \begin{align}
   \!\!  m\int_D u(\bm{y})\differential\bm{y}\leq \mathcal{F}_\varphi(u)(\bm{x}) \leq M \int_D u(\bm{y})\differential\bm{y} \quad \forall \bm{x} \in D.
     \end{align}
Letting $\bm{1}$ to be the 1-valued function on $D$, we bound the projective diameter (see \eqref{eq:proj-diam-ineq}) of $\mathcal F_\varphi$ as
    \begin{align*}
        {\mathrm{ProjDiam}}(\mathcal F_\varphi) &\leq 2\sup_{u\in K_+} d_{\rm{H}}(\mathcal{F}_\varphi(u),\bm{1}) \\
        &= 2\sup_{u\in K_+} \log\left(\frac{\sup_{\bm{x}\in D}\mathcal{F}_\varphi(u)(\bm{x})}{\inf_{\bm{x}\in D}\mathcal{F}_\varphi(u)(\bm{x})}\right) \\
        &\leq 2\sup_{u\in K_+} \log\left(\frac{M\int_D u(\bm{y})\differential\bm{y}}{m\int_D u(\bm{y})\differential\bm{y}}\right) \\
        &= 2\log\left(\frac{M}{m}\right) < \infty.
    \end{align*}
Then, by the Birkhoff-Bushell theorem (Theorem~\ref {thm:BBtheorem}), we have
    $$ \kappa_{\rm H}(\mathcal F_\varphi) \leq \tanh\left(\frac{\log(M/m)}{2}\right)<1.$$
Therefore, $\mathcal F_\varphi$, for a fixed $\varphi$, is globally contractive. 
\end{proof}

Before we proceed to our next statement, we define the operator $\Gamma_{\varphi}^{t\rightarrow s} \star u$ for $t_0\leq t\leq s\leq t_1$ as
\begin{align}\label{def:U-2}
\left(\Gamma_{\varphi}^{t\rightarrow s} \star u\right)(\bm{x}) &:= \int_D \Gamma_\varphi(s,\bm{x},t,\bm{y}) u(\bm{y}) \differential\bm{y},
\end{align}
where $\Gamma_\varphi(t,\bm{x},t,\bm{y}) = \delta(\bm{x}-\bm{y})$, the Dirac delta. 

We remind the reader of the notational convention from Sec. \ref{sec:SolutionOfCASBP} that $\widehat{\varphi}_t, \varphi_t$ refer to a pair of snapshots at $t\in[t_0,t_1]$, while $\widehat{\varphi}, \varphi$ (without the subscript $t$) refer to a pair of trajectories.

\begin{proposition}[Bounding the Hilbert metric between $\mathcal{F}_{\varphi}(v),\mathcal{F}_{\varphi'}(v)$]\label{prop:bound_dH-Fs}
Let $\varphi^*$ be a solution of \eqref{varphiPDE} subject to some final condition $\varphi_1^* \in K_+$ and $V_{\varphi^*}$ be a neighborhood of $\varphi^*$. Assume the following for all $\varphi, \varphi' \in V_{\varphi^*}$.
   \begin{enumerate}[label=\textbf{B\arabic*}]
        \item  \label{B1} There exist constants $m_V, M_V$ where $0 < m_V \leq M_V < \infty$ such that
        \begin{align}
                m_V  \leq \Gamma_\varphi(t_1,\bm{x},t_0,\bm{y}) \leq M_V \quad \forall \bm{x},\bm{y} \in D.
        \end{align} 
        \item \label{B1'} There exists a constant $0\leq \ell < \infty$ such that for all $t_0 \leq t \leq s \leq t_1$,
        \begin{align}
           \Vert \Gamma_{\varphi}^{t\rightarrow s} \star u \Vert_\infty \leq \ell \Vert u \Vert_\infty  \quad \forall u\in L^\infty(D).
        \end{align}
        \item \label{B2} There exist constants $0\leq c_i<\infty$, $i \in \{1,2\}$ such that
        \begin{align}
        \sup_{t \in [t_0,t_1]} \|\nabla^i_{\bm{x}}\log\varphi'_t-\nabla^i_{\bm{x}}\log\varphi_t\|_\infty &\leq c_i, \label{eq:bounds_gradlogdiff}
        \end{align}
        where $\nabla_{\bm{x}}^2 := \langle\nabla_{\bm{x}},\nabla_{\bm{x}}\rangle$, the standard Laplacian. 
        \item  \label{B3} There exists a constant $0\leq c_3< \infty$ such that
        \begin{align}
       \sup_{t \in [t_0,t_1]} \|\nabla_{\bm{x}}\log\varphi_t\|_\infty &\leq c_3. \label{eq:bounds_gradlog}
        \end{align}
        \item \label{B4} For every $v \in K_+$, there exist constants $0<A_i = A_i(v)< \infty$, $i\in \{0,1\}$, such that for every $\varphi \in V_{\varphi^*}$, if $\widehat \varphi$ is the solution to \eqref{varphihatPDE} that corresponds to $\varphi$ and subject to $\widehat \varphi_0 =v$, then
        \begin{align}
       \sup_{t \in [t_0,t_1]} \|\nabla^i_{\bm{x}} \widehat{\varphi}_t\|_\infty &\leq A_i\|v\|_\infty. \label{eq:bounds_varphihat}
        \end{align}
    \end{enumerate}
Now let $Z := c_1\|\nabla_{\bm{x}}\cdot\left(\lambda\bm{gg}^{\top}-\bm{\Sigma}\right)\|_\infty + (c_2 + c_1 c_3)\|\lambda\bm{gg}^{\top}-\bm{\Sigma}\|_\infty$ and let 
\begin{align}
    \alpha_{\mathcal{F}}(v) := \frac{\ell(A_1 c_1\|\lambda\bm{gg}^{\top}-\bm{\Sigma}\|_\infty +A_0 Z)}{m_V\int_D v(\bm{y}) \differential\bm{y}} (t_1-t_0)\|v\|_\infty,
    \label{eq:def_alpha_F}
    \end{align}
    for all $v \in K_+$.
Then, for every $v \in K_+$ and $\varphi, \varphi' \in V_{\varphi^*}$, we have
    \begin{equation}
    d_{\mathrm{H}}(\mathcal{F}_{\varphi}(v),\mathcal{F}_{\varphi'}(v)) \leq\log\left(\frac{1+\alpha_\mathcal{F}(v)}{1-\alpha_\mathcal{F}(v)}\right),
    \label{eq:bound_dH-different-varphi}
    \end{equation}
    provided that $\alpha_\mathcal{F}(v)<1$.
\end{proposition}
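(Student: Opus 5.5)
We will prove the bound by a Duhamel (variation-of-constants) argument. Fix $v\in K_+$ and $\varphi,\varphi'\in V_{\varphi^*}$. Let $\widehat\varphi,\widehat\varphi'$ be the solutions of \eqref{varphihatPDE} driven by $\varphi,\varphi'$ respectively, both with $\widehat\varphi_0=\widehat\varphi'_0=v$, and set $w_t:=\widehat\varphi'_t-\widehat\varphi_t$. Subtracting the two forward PDEs, we write the equation for $w$ with the operator of $\varphi$ and put the mismatch into a source term:
\begin{align*}
\partial_t w_t = \overrightarrow{\mathcal L}_{\varphi}[w_t] + R_t,\qquad R_t := \overrightarrow{\mathcal L}_{\varphi'}[\widehat\varphi'_t]-\overrightarrow{\mathcal L}_{\varphi}[\widehat\varphi'_t],\qquad w_{t_0}=0.
\end{align*}
Duhamel's formula with the fundamental solution $\Gamma_\varphi$ then gives $w_{t_1}=\int_{t_0}^{t_1}\Gamma_\varphi^{s\to t_1}\star R_s\,\differential s$. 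Assumption \ref{B1'} turns this into
\begin{align*}
\|\mathcal F_{\varphi'}(v)-\mathcal F_\varphi(v)\|_\infty\le \ell\,(t_1-t_0)\sup_s\|R_s\|_\infty .
\end{align*}

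Next we bound $R_s$ pointwise. Write $\bm{W}:=\lambda\bm{gg}^\top-\bm\Sigma$, $\bm{a}:=\nabla\log\varphi_s$ and $\bm{a}':=\nabla\log\varphi'_s$. Only the terms in \eqref{eq:L_forward} involving $\bm f_\varphi$ and $q_\varphi$ differ, so
\begin{align*}
R_s = -\nabla\cdot\bigl(\widehat\varphi'_s\,\bm W(\bm a'-\bm a)\bigr) - (q_{\varphi'}-q_\varphi)\,\widehat\varphi'_s .
\end{align*}
Expanding the divergence produces three pieces. The first is $\langle\nabla\widehat\varphi'_s,\bm W(\bm a'-\bm a)\rangle$, bounded by $A_1c_1\|\bm W\|_\infty\|v\|_\infty$ via \ref{B2} and \ref{B4}. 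The second is $\widehat\varphi'_s(\nabla\cdot\bm W)\cdot(\bm a'-\bm a)$, bounded by $A_0c_1\|\nabla\cdot\bm W\|_\infty\|v\|_\infty$. The third is $\widehat\varphi'_s\langle\bm W,\nabla(\bm a'-\bm a)\rangle$, which \ref{B2} with $i=2$ bounds by $A_0c_2\|\bm W\|_\infty\|v\|_\infty$. For the quadratic difference we use $q_{\varphi'}-q_\varphi=\tfrac12(\bm a'-\bm a)^\top\bm W(\bm a'+\bm a)$. This needs a bound on $\bm a'$ as well as $\bm a$; since \ref{B3} holds for every element of $V_{\varphi^*}$, both are available, and the target constant $c_1c_3$ indicates the factor $\tfrac12$ absorbs $\|\bm a'+\bm a\|\le 2c_3$. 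Altogether $\sup_s\|R_s\|_\infty\le (A_1c_1\|\bm W\|_\infty+A_0Z)\|v\|_\infty$. Interpreting $\nabla^2$ in \ref{B2} as the Laplacian may require reading $\langle\bm W,\nabla(\bm a'-\bm a)\rangle$ loosely (e.g.\ via a symmetric divergence form); this is a bookkeeping point we will accept as the paper's convention.

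Finally we convert the sup-norm bound into a Hilbert-metric bound. By \ref{B1} and \eqref{GreenFunctionIntegral}, $\mathcal F_\varphi(v)\ge m_V\int_D v$ pointwise. Hence, with $\epsilon:=\|w_{t_1}\|_\infty/(m_V\int_D v)\le\alpha_{\mathcal F}(v)$, we get $(1-\epsilon)\mathcal F_\varphi(v)\le\mathcal F_{\varphi'}(v)\le(1+\epsilon)\mathcal F_\varphi(v)$. By \eqref{def:H-alt}, $d_{\rm H}\le\log\frac{1+\epsilon}{1-\epsilon}$, which is monotone in $\epsilon$, giving \eqref{eq:bound_dH-different-varphi} whenever $\alpha_{\mathcal F}(v)<1$.

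The main obstacle is justifying the Duhamel representation with the fundamental solution of $\varphi$ while the source involves $\widehat\varphi'$. This requires $\widehat\varphi'$ to be regular enough that $R_s\in L^\infty$, which is what \ref{B4} supplies, and the bounds in \ref{B4} must apply to $\widehat\varphi'$ as well, which they do since $\varphi'\in V_{\varphi^*}$.
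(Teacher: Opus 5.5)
Your proof is correct and follows the paper's argument essentially step for step. It uses the same Duhamel representation of the difference of the two forward solutions with $\Gamma_\varphi$, the same term-by-term bound of the source via \ref{B2}--\ref{B4} giving $(A_1c_1\|\lambda\bm{gg}^{\top}-\bm{\Sigma}\|_\infty + A_0 Z)\|v\|_\infty$, and the same ratio bound via the lower bound $m_V\int_D v$ from \ref{B1}. Your sign convention for the difference and your choice to divide by $\mathcal{F}_{\varphi}(v)$ rather than $\mathcal{F}_{\varphi'}(v)$ change nothing, since \ref{B1} holds on all of $V_{\varphi^*}$. The Hessian-versus-Laplacian issue you flag also sits in the paper's own step bounding $\langle \lambda\bm{gg}^{\top}-\bm{\Sigma}, \hess_{\bm{x}}(\log\varphi'_t-\log\varphi_t)\rangle$ by $c_2$, and both proofs are repaired by reading \ref{B2} with $i=2$ as a bound on the full Hessian difference (up to the choice of matrix norm).
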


\begin{proof}
    Let $\varphi, \varphi' \in V_{\varphi^*}$. Also let $\widehat \varphi, \widehat \varphi'$ be solutions of \eqref{varphihatPDE} that correspond to $\varphi$ and $\varphi'$, respectively, subject to the same initial condition    $\widehat{\varphi}_0=\widehat{\varphi}_0'=v$, $v \in K_+$.

Letting $z_t:=\widehat{\varphi}_t-\widehat{\varphi}_t'$, and accordingly the trajectories $z:=\widehat{\varphi}-\widehat{\varphi}'$, 
    observe that 
    \begin{align*}
        \partial_t z &= \partial_t\widehat{\varphi}-\partial_t\widehat{\varphi}' \\
         &= \overrightarrow{\mathcal{L}}_{\varphi}[\widehat{\varphi}] - \overrightarrow{\mathcal{L}}_{\varphi'}[\widehat{\varphi}'] \\
        &= \overrightarrow{\mathcal{L}}_{\varphi}[z] + \underbrace{\left(\overrightarrow{\mathcal{L}}_{\varphi}[\widehat{\varphi}']-\overrightarrow{\mathcal{L}}_{\varphi'}[\widehat{\varphi}']\right)}_{=:\mathcal{L}_\Delta[\widehat{\varphi}']}.
    \end{align*}
    The difference operator $\mathcal{L}_\Delta[\cdot]$ reads
    \begin{subequations}\label{eq:Ldelta}
    \begin{align}
\mathcal{L}_\Delta[\widehat{\varphi}'] = \langle\nabla_{\bm{x}}\widehat{\varphi}', b_\Delta\rangle + \widehat{\varphi}'c_\Delta 
     \end{align}
    where 
    \begin{align}
     b_\Delta &:= \bm{f}_{\varphi'} - \bm{f}_\varphi, \label{eq:b-delta} \\
     c_\Delta &:=  \nabla_{\bm{x}}\cdot(\bm{f}_{\varphi'} - \bm{f}_\varphi) + (q_{\varphi'}-q_\varphi). \label{eq:c-delta}
    \end{align}
   \end{subequations}
    
    By the nonautonomous variation-of-constants formula, also known as \emph{Duhamel's principle} \cite[Pg. 51]{evans2022partial}\cite{filali2003non}, it follows that 
    $$ z_s = \left(\Gamma_\varphi^{t_0\rightarrow s}\star z_{t_0}\right) + \int_{t_0}^s \left(\Gamma_\varphi^{t\rightarrow s}\star\mathcal{L}_\Delta[\widehat{\varphi}_t']\right)\differential t.$$
   In our case $z_{t_0}=0$, and so
    $$ z_{t_1} = \int_{t_0}^{t_1} \left(\Gamma_\varphi^{t\rightarrow t_1}\star\mathcal{L}_\Delta[\widehat{\varphi}_t']\right)\differential t. $$
     Further,
     \begin{align*}
    \|z_{t_1}\|_\infty &\leq \int_{t_0}^{t_1}\|\Gamma_\varphi^{t\rightarrow t_1}\star\mathcal{L}_\Delta[\widehat{\varphi}_t']\|_\infty  \differential t.
    \end{align*}   
    Due to \ref{B2}-\ref{B4}, $\mathcal{L}_\Delta[\widehat{\varphi}'_t] \in L^\infty(D)$ for all $t \in [t_0,t_1]$. Then by \ref{B1'}, we have
    \begin{align}
    \|z_{t_1}\|_\infty &\leq \ell  \int_{t_0}^{t_1} \Vert \mathcal{L}_\Delta[\widehat{\varphi}_t']\|_\infty   \differential t \nonumber\\
    &\leq   \ell (t_1-t_0) \sup_{t \in [t_0,t_1]} \Vert \mathcal{L}_\Delta[\widehat{\varphi}_t']\|_\infty. \label{eq:z-1-L-bound}
    \end{align}
    
Now observe that,
    \begin{align}
&\sup_{t \in [t_0,t_1]} \|b_\Delta(t)\|_\infty\nonumber\\
\stackrel{\eqref{eq:b-delta}}{=}&\sup_{t \in [t_0,t_1]}\|\bm{f}_{\varphi'}(t)-\bm{f}_{\varphi}(t)\|_\infty \nonumber \\
    \stackrel{\eqref{def_fphi}}{=}&  \sup_{t \in [t_0,t_1]} \|\left(\lambda\bm{gg}^{\top}-\bm{\Sigma}\right)(\nabla_{\bm{x}}\log\varphi'_t-\nabla_{\bm{x}}\log\varphi_t)\|_\infty \nonumber \\
    \stackrel{\ref{B2}}{\leq} & c_1 \|\lambda\bm{gg}^{\top}-\bm{\Sigma}\|_\infty. \label{eq:bound_bdelta}
    \end{align}
   We also have
    \begin{align}
    &\|c_\Delta(t)\|_\infty \stackrel{\eqref{eq:c-delta}}{=} \|\nabla_{\bm{x}}\cdot(\bm{f}_{\varphi'} - \bm{f}_\varphi)(t) + (q_{\varphi'}-q_\varphi)(t)\|_\infty  \nonumber \\
    &\leq \|\nabla_{\bm{x}}\cdot(\bm{f}_{\varphi'} - \bm{f}_\varphi)(t)\|_\infty +  \|(q_{\varphi'}-q_\varphi)(t)\|_\infty. \label{eq:c}
    \end{align}
    Note that
    \begin{align} \|&\nabla_{\bm{x}}\cdot(\bm{f}_{\varphi'} - \bm{f}_\varphi)(t)\|_\infty \stackrel{\eqref{def_fphi}}{\leq} \nonumber \\
   &\|(\nabla_{\bm{x}}\cdot\lambda\bm{gg}^{\top}-\bm{\Sigma}) \cdot (\nabla_{\bm{x}}\log\varphi_t'-\nabla_{\bm{x}}\log\varphi_t )\Vert_\infty \nonumber\\
   &+ \Vert\left(\lambda\bm{gg}^{\top}-\bm{\Sigma}\right)(\nabla^2_{\bm{x}}\log\varphi'_t-\nabla^2_{\bm{x}}\log\varphi_t)\|_\infty, \nonumber \\
   &\stackrel{\ref{B2}}{\leq}  c_1\|\nabla_{\bm{x}}\cdot\lambda\bm{gg}^{\top}-\bm{\Sigma}\|_\infty + c_2\|\lambda\bm{gg}^{\top}-\bm{\Sigma}\|_\infty, \label{eq:f}
   \end{align}
   and
   \begin{align}\label{eq:q}
        \|(q_{\varphi'}&-q_\varphi)(t)\|_\infty \stackrel{\eqref{def_qphi}}{\leq} \Big[ \frac{1}{2}  \|\nabla_{\bm{x}}\log\varphi'_t+\nabla_{\bm{x}}\log\varphi_t \|_\infty  \times \nonumber  \\
     &  \| \lambda\bm{gg}^{\top}-\bm{\Sigma} \|_\infty \|\nabla_{\bm{x}}\log\varphi'_t-\nabla_{\bm{x}}\log\varphi_t \|_\infty \Big], \nonumber \\
     & \stackrel{\ref{B3}}{\leq}  c_1 c_3\|\lambda\bm{gg}^{\top}-\bm{\Sigma}\|_\infty.
   \end{align}
    Therewith, from \eqref{eq:c}-\eqref{eq:q}, we have
    \begin{align}
        &\sup_{t \in [t_0,t_1]} \|c_\Delta(t)\|_\infty \leq \nonumber \\
        & c_1\|\nabla_{\bm{x}}\cdot\lambda\bm{gg}^{\top}-\bm{\Sigma}\|_\infty + (c_2 +c_1 c_3) \|\lambda\bm{gg}^{\top}-\bm{\Sigma}\|_\infty  =:Z. \label{eq:bound_cdelta}
    \end{align}
    
    From \eqref{eq:Ldelta}, we know that
    \begin{align*}
    \sup_{t\in [t_0,t_1]} \|\mathcal{L}_\Delta[\widehat{\varphi}_t']\|_\infty &\leq \sup_{t \in [t_0,t_1]} (\|\nabla_{\bm{x}} \widehat{\varphi}_t'\|_\infty\|b_\Delta(t)\|_\infty)\\
    &+ \sup_{t \in [t_0,t_1]} (\|\widehat{\varphi}_t'\|_\infty\|c_\Delta(t)\|_\infty).
    \end{align*}
   By using \eqref{eq:bound_bdelta} and \eqref{eq:bound_cdelta}, we obtain
    \begin{align}
       \!\!\!   \sup_{t\in [t_0,t_1]} \|\mathcal{L}_\Delta[\widehat{\varphi}_t']\|_\infty &\leq \sup_{t \in [t_0,t_1]} \|\nabla_{\bm{x}} \widehat{\varphi}_t'\|_\infty c_1 \|\lambda\bm{gg}^{\top}-\bm{\Sigma}\|_\infty  \nonumber \\
    &+ \sup_{t \in [t_0,t_1]} \|\widehat{\varphi}_t'\|_\infty Z \nonumber \\
    & \hspace{-20pt}\stackrel{\ref{B4}}{\leq} (A_1 c_1\|\lambda\bm{gg}^{\top}-\bm{\Sigma}\|_\infty + A_0 Z)\|v\|_\infty.
    \end{align}
      By substituting the previous inequality into \eqref{eq:z-1-L-bound}, we have
      \begin{align} \label{eq:z_1-bound}
 \|z_{t_1}\|_\infty \leq \ell (t_1-t_0)  (A_1 c_1\|\lambda\bm{gg}^{\top}-\bm{\Sigma}\|_\infty + A_0 Z)\|v\|_\infty.         
      \end{align}

       Now observe that
    \begin{align} \label{eq:z-inf}
        \frac{-\|z_{t_1}\|_\infty}{\inf_{\bm{x} \in D}\widehat{\varphi}_1'(\bm{x})} \leq \frac{z_{t_1}(\bm{x})}{\widehat{\varphi}_1'(\bm{x})} \leq \frac{\|z_{t_1}\|_\infty}{\inf_{\bm{x} \in D}\widehat{\varphi}_1'(\bm{x})},
    \end{align}
for all $\bm{x} \in D$ and further,
 \begin{align}\label{eq:inf-den}
     \inf_{\bm{x} \in D}\widehat{\varphi}_1'(\bm{x}) &= \inf_{\bm{x} \in D} \int \Gamma_{\varphi'} (t_1,\bm{x},t_0,\bm{y})v(\bm{y}) \differential \bm{y} \nonumber \\
     &\stackrel{\ref{B1}}{\geq}  m_V \int_D v(\bm{y}) d \bm{y}. 
 \end{align}
 By substituting \eqref{eq:z_1-bound} and \eqref{eq:inf-den} into \eqref{eq:z-inf}, we get
 \begin{align}\label{eq:z_1-ratio}
         -\alpha_{\mathcal F}(v) \leq \frac{z_{t_1}(\bm{x})}{\widehat{\varphi}_1'(\bm{x})} \leq \alpha_{\mathcal F}(v) \quad \forall \bm{x} \in D
 \end{align}
 where $\alpha_{\mathcal F}(v)$ is given in \eqref{eq:def_alpha_F}.
    
            Recall that $z_{t_1} = \widehat{\varphi}_1-\widehat{\varphi}_1'$. Consequently, 
    \begin{align} \label{eq:hatphi-z-ratio}
        \frac{\widehat{\varphi}_1}{\widehat{\varphi}_1'} = 1 + \frac{z_{t_1}}{\widehat{\varphi}_1'}.
    \end{align} 
   By \eqref{eq:z_1-ratio} and \eqref{eq:hatphi-z-ratio}, we obtain
    \begin{align*}
        1-   \alpha_{\mathcal F}(v) \leq  \frac{\widehat{\varphi}_1}{\widehat{\varphi}_1'} = \frac{\mathcal F_\varphi(v)}{\mathcal F_{\varphi'}(v)}  \leq 1 +  \alpha_{\mathcal F}(v).
    \end{align*}
   By definition of the Hilbert metric (Definition~\ref{def:H-alt}), 
   \begin{align}
       d_{\rm H}(\mathcal F_\varphi(v),\mathcal F_{\varphi'}(v)) \leq \log (1+ \alpha_{\mathcal F}) - \log(1- \alpha_{\mathcal F}),
   \end{align}
   resulting in the expression in \eqref{eq:bound_dH-different-varphi}.
\end{proof}

\begin{proposition}[Bounding the Hilbert metric between $\mathcal{F}_{\varphi}(u),\mathcal{F}_{\varphi'}(v)$]\label{prop:combined}
    Consider the neighborhood $V_{\varphi^*}$ and the assumptions in Proposition~\ref{prop:bound_dH-Fs}. Then for every $\varphi, \varphi' \in V_{\varphi^*}$ and $u, v \in K_+$,
 \begin{align}\label{eq:F-all}
      d_{\mathrm{H}}(\mathcal{F}_{\varphi}(u),\mathcal{F}_{\varphi'}(v)) &\leq \kappa_{\mathcal F}^* \,  d_{\mathrm{H}}(u,v) + \varepsilon(v),
 \end{align}
where 
\begin{align}
    \kappa_{\mathcal F}^*  &:=  \sup_{\varphi \in V_{\varphi^*}} \kappa_{\rm H}(\mathcal F_\varphi),   \label{eq:contraction-ration-V} \\
    \varepsilon(v) &:= \log \left(\frac{1+ \alpha_{\mathcal F}(v)}{1- \alpha_{\mathcal F}(v)} \right), \label{eq:delta}
\end{align}
provided that ${\alpha_{\mathcal F}(v) <1}$. In addition, $$\kappa_{\mathcal F}^*  \leq \tanh\left(\frac{\log(M_V/m_V)}{2}\right)<1.$$
\end{proposition}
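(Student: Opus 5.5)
The plan is to start from the triangle inequality \eqref{eq:hilbertFtriangle} and bound its two terms separately, using the two preceding propositions. For fixed $\varphi,\varphi'\in V_{\varphi^*}$ and $u,v\in K_+$, we split
\begin{align*}
d_{\mathrm{H}}(\mathcal{F}_{\varphi}(u),\mathcal{F}_{\varphi'}(v)) \leq d_{\mathrm{H}}(\mathcal{F}_{\varphi}(u),\mathcal{F}_{\varphi}(v)) + d_{\mathrm{H}}(\mathcal{F}_{\varphi}(v),\mathcal{F}_{\varphi'}(v)).
\end{align*}
Because $\varphi$ and $\varphi'$ both lie in $V_{\varphi^*}$ and the hypotheses \ref{B1}--\ref{B4} are exactly those of Proposition~\ref{prop:bound_dH-Fs}, the second term is bounded by $\log\left((1+\alpha_{\mathcal F}(v))/(1-\alpha_{\mathcal F}(v))\right)=\varepsilon(v)$ whenever $\alpha_{\mathcal F}(v)<1$. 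This gives the additive residual.

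For the first term, Proposition~\ref{prop:Fvarphi_contractive} says each $\mathcal F_\varphi$ with $\varphi$ frozen is linear and globally contractive. Hence $d_{\mathrm{H}}(\mathcal{F}_{\varphi}(u),\mathcal{F}_{\varphi}(v))\leq \kappa_{\rm H}(\mathcal F_\varphi)\, d_{\mathrm{H}}(u,v)\leq \kappa_{\mathcal F}^*\, d_{\mathrm{H}}(u,v)$, with $\kappa_{\mathcal F}^*$ the supremum in \eqref{eq:contraction-ration-V}. Adding the two bounds yields \eqref{eq:F-all}.

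The remaining and main step is to show that $\kappa_{\mathcal F}^*<1$ uniformly over the neighborhood. A pointwise bound $\kappa_{\rm H}(\mathcal F_\varphi)<1$ for each $\varphi$ does not suffice, since the supremum could approach $1$. To get uniformity I will repeat the projective-diameter argument from the proof of Proposition~\ref{prop:Fvarphi_contractive}, replacing the $\varphi$-dependent constants $m,M$ by the uniform constants $m_V,M_V$ from \ref{B1}.

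Concretely, for every $\varphi\in V_{\varphi^*}$ and $u\in K_+$, the representation \eqref{GreenFunctionIntegral} together with \ref{B1} gives
\[
m_V\int_D u \leq \mathcal F_\varphi(u)(\bm{x})\leq M_V\int_D u .
\]
Then \eqref{eq:proj-diam-ineq} with the reference function $\bm{1}$ gives ${\mathrm{ProjDiam}}(\mathcal F_\varphi)\leq 2\log(M_V/m_V)$. The linear case of Theorem~\ref{thm:BBtheorem} then gives $\kappa_{\rm H}(\mathcal F_\varphi)\leq\tanh\left(\log(M_V/m_V)/2\right)$. Since this bound does not depend on $\varphi$, it passes to the supremum. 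Finally, $M_V/m_V<\infty$ makes the argument of $\tanh$ finite, so the bound is strictly less than $1$.
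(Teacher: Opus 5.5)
Your proposal is correct and follows essentially the same route as the paper. It splits via the triangle inequality with intermediate point $\mathcal F_\varphi(v)$, bounds the residual by Proposition~\ref{prop:bound_dH-Fs}, and reruns the projective-diameter argument of Proposition~\ref{prop:Fvarphi_contractive} with the uniform constants $m_V, M_V$ from \ref{B1} to get $\kappa_{\mathcal F}^*\le\tanh(\log(M_V/m_V)/2)<1$; you in fact spell out the ``similar arguments'' step that the paper only sketches.
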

\begin{proof}
    From Proposition~\ref{prop:Fvarphi_contractive}, we know that for a fixed $\varphi$, $\mathcal F_\varphi$ is contractive with contraction ratio $\kappa_{\mathrm{H}}(\mathcal F_\varphi)$ satisfying
\begin{align}\label{eq:contraction-varphi-const}
        \kappa_{\rm H}(\mathcal F_\varphi)   \leq \tanh\left(\frac{\log(M/m)}{2}\right)<1.
    \end{align}
    Similar arguments to those in the proof of Proposition \ref{prop:Fvarphi_contractive} together with \ref{B1} imply that
    \begin{align}
        \kappa_{\rm H}(\mathcal F_\varphi) \leq \tanh\left(\frac{\log(M_V/m_V)}{2}\right)<1
    \end{align}
    for all $\varphi \in V_{\varphi^*}$.
    Using the definition in \eqref{eq:contraction-ration-V}, then it holds that
    \begin{align}
         \kappa_{\mathcal F}^* \leq \tanh\left(\frac{\log(M_V/m_V)}{2}\right)<1.
    \end{align}
    Consequently, for all $\varphi \in V_{\varphi^*}$, it follows that 
     \begin{align}\label{eq:term-1-F}
      d_{\mathrm{H}}(\mathcal{F}_{\varphi}(u),\mathcal{F}_{\varphi}(v)) \leq \kappa_{\mathcal F}^* \, d_{\mathrm{H}}(u,v).
      \end{align}
      
    Also immediately from Proposition~\ref{prop:bound_dH-Fs}, we get that for all $\varphi,\varphi' \in V_{\varphi^*},v \in K_+$, 
    \begin{align}\label{eq:term-2-F}
        d_{\mathrm{H}}(\mathcal{F}_{\varphi}(v),\mathcal{F}_{\varphi'}(v)) \leq \varepsilon(v),
    \end{align}
    with $\varepsilon(v)$ given by \eqref{eq:delta}.  
   By the triangle inequality
   \begin{align*}
        d_{\mathrm{H}}(\mathcal{F}_{\varphi}(u),\mathcal{F}_{\varphi'}(v)) &\leq  d_{\mathrm{H}}(\mathcal{F}_{\varphi}(u),\mathcal{F}_{\varphi}(v))\\
        &+  d_{\mathrm{H}}(\mathcal{F}_{\varphi}(v),\mathcal{F}_{\varphi'}(v)),
   \end{align*}
  and the inequalities \eqref{eq:term-1-F} and \eqref{eq:term-2-F}, we arrive at \eqref{eq:F-all}.    
\end{proof}

\subsection{Local Stability of Algorithm \ref{alg:memhorn}}\label{subsec:OverallLocalStability}
Building on Propositions~\ref{prop:Bnonexpansive}-\ref{prop:combined}, our main result on the stability of Algorithm~\ref{alg:memhorn} is as follows.
\begin{theorem}\label{thm:main}
  Consider $\varphi_1^* \in K_+$ such that
   \begin{align}
       \varphi_1^* = \mathcal T_{\varphi^*}(\varphi_1^*),
       \end{align}
       i.e.,  $\varphi_1^*$ is a fixed point of the update map $\mathcal T_{\varphi^*}$ defined by the trajectory $\varphi^*$. Let
   \begin{align*}
       \widehat \varphi_0^* :=  \mathcal D_0(\mathcal B(\varphi_1^*)).
   \end{align*}
   For $r>0$, consider a Hilbert ball centered around $\varphi_1^*$, given by
      \begin{align}
        B_r(\varphi_1^*) := \{ \varphi_1 \in K_+ ~|~ d_{\rm H} (\varphi_1, \varphi_1^*) \leq r\},
    \end{align}
    such that whenever $\varphi_1 \in B_r(\varphi_1^*)$, its corresponding trajectory $\varphi$ belongs to $V_{\varphi^*}$.
    Under the assumptions of Propositions~\ref{prop:Bnonexpansive}-\ref{prop:combined}, if
    \begin{align} \label{eq:r-cond}
        r\geq \frac{\varepsilon(\widehat \varphi_0^*)}{1- \kappa_{\mathcal B} \kappa_{\mathcal F}^*},
    \end{align}
   and the initial guess of Algorithm~\ref{alg:memhorn}, $ \varphi_1^{(1)} =\varphi_{\rm init}$, belongs to $B_r(\varphi_1^*)$, then the updates $\varphi_1^{(j+1)}= \mathcal T_{j} (\varphi_1^{(j)})$ also belong to $B_r(\varphi_1^*)$ for all iteration index $j\in\mathbb{N}$.
    Additionally, 
    \begin{align}\label{eq:err-ball}
     \lim_{j \to \infty} d_{\rm H}(\varphi_1^{(j)}, \varphi_1^*) \leq \frac{\varepsilon(\widehat \varphi_0^*)}{1- \kappa_{\mathcal B} \kappa_{\mathcal F}^*}.
    \end{align}
\end{theorem}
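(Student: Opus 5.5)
The plan is to derive a one-step inequality of the form
\begin{align*}
d_{\rm H}(\varphi_1^{(j+1)},\varphi_1^*) \leq \kappa_{\mathcal B}\kappa_{\mathcal F}^*\, d_{\rm H}(\varphi_1^{(j)},\varphi_1^*) + \varepsilon(\widehat\varphi_0^*),
\end{align*}
valid whenever $\varphi_1^{(j)}\in B_r(\varphi_1^*)$, and then close the argument with induction and a scalar linear recursion.

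To get the one-step bound, fix $j$ with $\varphi_1^{(j)}\in B_r(\varphi_1^*)$. By the hypothesis on $r$, the backward trajectory $\varphi^{(j)}$ generated from $\varphi_1^{(j)}$ lies in $V_{\varphi^*}$, and trivially $\varphi^*\in V_{\varphi^*}$. Write $u:=\mathcal D_0(\mathcal B(\varphi_1^{(j)}))$ and $v:=\widehat\varphi_0^*=\mathcal D_0(\mathcal B(\varphi_1^*))$. Since $\varphi_1^*=\mathcal T_{\varphi^*}(\varphi_1^*)$, we have
\begin{align*}
d_{\rm H}(\varphi_1^{(j+1)},\varphi_1^*) = d_{\rm H}\big(\mathcal D_1(\mathcal F_{\varphi^{(j)}}(u)),\mathcal D_1(\mathcal F_{\varphi^*}(v))\big).
\end{align*}
Because $\mathcal D_1$ is an isometry, this equals $d_{\rm H}(\mathcal F_{\varphi^{(j)}}(u),\mathcal F_{\varphi^*}(v))$. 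Proposition~\ref{prop:combined}, with $\varphi=\varphi^{(j)}$, $\varphi'=\varphi^*$, and the second argument $v=\widehat\varphi_0^*$ fixed (using $\alpha_{\mathcal F}(\widehat\varphi_0^*)<1$, which is implicit in the assumption), bounds this by $\kappa_{\mathcal F}^*\, d_{\rm H}(u,v)+\varepsilon(\widehat\varphi_0^*)$. Next, $d_{\rm H}(u,v)=d_{\rm H}(\mathcal B(\varphi_1^{(j)}),\mathcal B(\varphi_1^*))$ because $\mathcal D_0$ is an isometry, and Proposition~\ref{prop:Bnonexpansive} gives $d_{\rm H}(u,v)\le \kappa_{\mathcal B}\, d_{\rm H}(\varphi_1^{(j)},\varphi_1^*)$. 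Combining these yields the displayed inequality.

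Invariance follows by induction. Set $\gamma:=\kappa_{\mathcal B}\kappa_{\mathcal F}^*<1$ and $\varepsilon^*:=\varepsilon(\widehat\varphi_0^*)$. If $d_{\rm H}(\varphi_1^{(j)},\varphi_1^*)\le r$, then $d_{\rm H}(\varphi_1^{(j+1)},\varphi_1^*)\le \gamma r+\varepsilon^*\le r$, where the last step is exactly the condition $r\ge \varepsilon^*/(1-\gamma)$ in \eqref{eq:r-cond}. The base case is the assumption $\varphi_{\rm init}\in B_r(\varphi_1^*)$. Since the iterates stay in the ball, the trajectories stay in $V_{\varphi^*}$ at every step, so the propositions remain applicable throughout.

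For the asymptotic bound, let $e_j:=d_{\rm H}(\varphi_1^{(j)},\varphi_1^*)$, so that $e_{j+1}\le \gamma e_j+\varepsilon^*$. Unrolling gives
\begin{align*}
e_j \le \gamma^{j-1}e_1 + \varepsilon^*\frac{1-\gamma^{j-1}}{1-\gamma}.
\end{align*}
Taking $\limsup$ gives \eqref{eq:err-ball}; the statement's $\lim$ should be read as $\limsup$, since convergence of $e_j$ itself is not claimed.

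The main obstacle is the careful placement of arguments in Proposition~\ref{prop:combined}. The residual must depend on the fixed-point quantity $\widehat\varphi_0^*$ and not on the moving iterate $u$, so the second argument pair must be $(\varphi^*,\widehat\varphi_0^*)$. The other delicate point is checking that $\varphi^{(j)}\in V_{\varphi^*}$ at each step, which is guaranteed by the standing hypothesis on the ball $B_r(\varphi_1^*)$.
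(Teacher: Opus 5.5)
Your proposal is correct and follows essentially the same route as the paper. Both derive the one-step bound $d_{\rm H}(\mathcal T_\varphi(\varphi_1),\varphi_1^*)\le\kappa_{\mathcal B}\kappa_{\mathcal F}^*\,d_{\rm H}(\varphi_1,\varphi_1^*)+\varepsilon(\widehat\varphi_0^*)$ from the isometries $\mathcal D_0,\mathcal D_1$, Proposition~\ref{prop:Bnonexpansive}, and Proposition~\ref{prop:combined} with second argument $(\varphi^*,\widehat\varphi_0^*)$, then close by induction and unrolling the scalar recursion. Your reading of the final limit as a $\limsup$ is a fair tightening, since the paper's proof also never shows that $\lim_{j} d_{\rm H}(\varphi_1^{(j)},\varphi_1^*)$ exists.
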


\begin{proof}
    Let $\varphi_1 \in B_r(\varphi_1^*)$ and the corresponding trajectory $\varphi \in V_{\varphi^*}$. Also, let $\widehat \varphi_0 :=  \mathcal D_0(\mathcal B(\varphi_1))$. 
    Since $\mathcal D_0: K_+ \to K_+$ is an isometry with respect to $d_{\rm H}(\cdot, \cdot)$, we have
    \begin{align} \label{eq:iso-1}
         d_{\rm H} (\widehat \varphi_0,  \widehat \varphi_0^*) =  d_{\rm H} (\mathcal B(\varphi_1), \mathcal B(\varphi_1^*)).
    \end{align}
    Under the assumption of Proposition~\ref {prop:Bnonexpansive}, $\mathcal B$ is non-expansive, thus satisfying   \begin{align}\label{eq:contraction-B-again}
         d_{\rm H} (\mathcal B(\varphi_1), \mathcal B(\varphi_1^*)) \leq \kappa_{\mathcal B} d_{\rm H} (\varphi_1, \varphi_1^*), 
    \end{align} with $\kappa_{\mathcal B} \leq 1$. Then, by \eqref{eq:iso-1} and \eqref{eq:contraction-B-again}, we obtain
    \begin{align} \label{eq:dis-hatvarphi0}
        d_{\rm H} (\widehat \varphi_0,  \widehat \varphi_0^*) \leq \kappa_{\mathcal B} d_{\rm H} (\varphi_1, \varphi_1^*).
    \end{align}

    We also note that
  \begin{align} \label{eq:iso-2}
   d_{\rm H} (\mathcal T_\varphi(\varphi_1), \varphi_1^*) &= d_{\rm H} (\mathcal T_\varphi(\varphi_1), \mathcal T_{\varphi^*}(\varphi_1^*))  \nonumber \\
   &= d_{\rm H}(\mathcal F_\varphi(\widehat\varphi_0), \mathcal F_{\varphi^*}(\widehat\varphi_0^*)),
\end{align}
because $\mathcal D_1:K_+ \to K_+$ is an isometry with respect to $d_{\rm H}(\cdot, \cdot)$. 
Under the assumptions of Propositions~\ref{prop:Fvarphi_contractive}-\ref{prop:combined}, we have
\begin{align}\label{eq:contraction-F-again}
      d_{\rm H}(\mathcal F_\varphi(\widehat\varphi_0), \mathcal F_{\varphi^*}(\widehat\varphi_0^*)) \leq \kappa_{\mathcal F}^* \, d_{\rm H} (\widehat \varphi_0,  \widehat \varphi_0^*) + \varepsilon (\widehat \varphi_0^*).
\end{align}
Then, by \eqref{eq:iso-2} and \eqref{eq:contraction-F-again}, we obtain
\begin{align}\label{eq:dist-T}
     d_{\rm H} (\mathcal T_\varphi(\varphi_1), \varphi_1^*) \leq \kappa_{\mathcal F}^* \, d_{\rm H} (\widehat \varphi_0,  \widehat \varphi_0^*) + \varepsilon (\widehat \varphi_0^*).
\end{align}
By \eqref{eq:dis-hatvarphi0} and \eqref{eq:dist-T}, we get
\begin{align}\label{eq:dis-T-2}
     d_{\rm H} (\mathcal T_\varphi(\varphi_1), \varphi_1^*) \leq \kappa_{\mathcal B}\kappa_{\mathcal F}^* \, d_{\rm H} ( \varphi_1,   \varphi_1^*) + \varepsilon (\widehat \varphi_0^*).
\end{align}

The previous relation implies that
\begin{align*}
 d_{\rm H} ( \varphi_1^{(2)},   \varphi_1^*) &= d_{\rm H} (\mathcal T_{\varphi^{(1)}}(\varphi_1^{(1)}), \varphi_1^*) \\
 & \stackrel{\eqref{eq:dis-T-2}}{\leq} \kappa_{\mathcal B}\kappa_{\mathcal F}^* \, d_{\rm H} ( \varphi_1^{(1)},   \varphi_1^*) + \varepsilon (\widehat \varphi_0^*).
\end{align*}
If $\varphi_1^{(1)}$ belongs to $B_r(\varphi_1^*)$, then
\begin{align*}
     d_{\rm H} ( \varphi_1^{(2)},   \varphi_1^*) \leq r \kappa_{\mathcal B}\kappa_{\mathcal F}^*  + \varepsilon (\widehat \varphi_0^*).
\end{align*}
And as long as the condition \eqref{eq:r-cond} holds, $\varphi_1^{(2)} \in B_r(\varphi_1^*)$.

In a similar way, we can verify that
\begin{align*}
      d_{\rm H} (\varphi_1^{(j)},& \varphi_1^*) \leq  (\kappa_{\mathcal B} \kappa_{\mathcal F}^*)^{j-1}  d_{\rm H} ( \varphi_1^{(1)},   \varphi_1^*) +  \nonumber\\
  & \varepsilon (\widehat \varphi_0^*)(1+ \kappa_{\mathcal B} \kappa_{\mathcal F}^*+ \cdots +(\kappa_{\mathcal B} \kappa_{\mathcal F}^*)^{j-2}),
\end{align*}
and $\varphi_1^{(j)} \in B_r(\varphi^*)$ if the condition \eqref{eq:r-cond} holds. Further,
\begin{align}
    \lim_{j \to \infty}  d_{\rm H}(\varphi_1^{(j)},& \varphi_1^*) \leq  r\lim_{j \to \infty}  (\kappa_{\mathcal B} \kappa_{\mathcal F}^*)^{j-1} \nonumber\\
    &+ \varepsilon (\widehat \varphi_0^*) \lim_{j \to \infty}   \sum_{i=0}^{j-2} (\kappa_{\mathcal B} \kappa_{\mathcal F}^*)^i.\label{FinalInequality}
\end{align}
Since $\kappa_{\mathcal B} \kappa_{\mathcal F}^* < 1$, $\lim_{j \to \infty}  (\kappa_{\mathcal B} \kappa_{\mathcal F}^*)^j=0$, and the geometric sum for the second summand in \eqref{FinalInequality} yields \eqref{eq:err-ball}.
    \end{proof}

Theorem~\ref{thm:main} states that Algorithm~\ref{alg:memhorn} is locally stable around a fixed point $\varphi^*_1$. In that, if the initial guess lies within a Hilbert metric ball centered around $\varphi_1^*$, consecutive iterates remain in this ball, provided that its radius satisfies \eqref{eq:r-cond}. 

Further, this Theorem shows that Algorithm~\ref{alg:memhorn} converges to a Hilbert metric ball around the fixed point, with radius not exceeding $\varepsilon(\widehat \varphi_0^*)/(1- \kappa_{\mathcal B} \kappa_{\mathcal F}^*)$, thereby providing an upper bound on the asymptotic error for Algorithm~\ref{alg:memhorn}. This error is sufficiently small, precisely $\varepsilon(\widehat \varphi_0^*)$ is small, for small channel mismatch and spatial rate of change for such mismatch, quantified respectively by $\Vert \lambda \bm{g g}^\top - \bm{\Sigma}\Vert_\infty$ and $\Vert \nabla_{\bm x} \cdot \left(\lambda \bm{g g}^\top - \bm{\Sigma}\right)\Vert_\infty$.

\begin{remark}[Warm start for Algorithm \ref{alg:memhorn}]
    The above discussion suggests a warm start or homotopic approach for practical implementation of Algorithm \ref{alg:memhorn}. For instance, if Algorithm \ref{alg:memhorn} fails to converge for large channel mismatch, one may take the initial guess $\varphi_{\rm{init}}$ to be a prior output (i.e., converged $\varphi_1$) of the Algorithm, solved then with small channel mismatch. 
\end{remark}

\section{Numerical Example}\label{sec:NumericalExample}
To illustrate the proposed algorithm, we consider steering the state PDF for a cubic spring-mass-damper with process noise. Specifically, we consider an instance of \eqref{CASBP} with $m=1$ input, $n=2$ states, $p=2$ noises, time horizon $[t_0,t_1]=[0,1]$, quadratic state cost $q = \frac{1}{2}(\bm{x}^{\bm{u}})^{\top}\bm{Q}\bm{x}^{\bm{u}}$ with $\bm{Q}={\mathrm{diag}}(1,2)$, $\lambda=1$, and
\begin{align*}
&\!\bm{f}(t,x_1,x_2) \!= \!\!\begin{pmatrix}
x_2\\
-\dfrac{\partial}{\partial x_1}V(x_1) - \gamma x_2
\end{pmatrix}\!\!,\; V(\cdot)=\frac{1}{4}(\cdot)^{4}, \; \gamma=1,\\
&\bm{g}(t,x_1,x_2) = \begin{pmatrix}
0\\
1
\end{pmatrix}, \; \bm{\sigma}(t,x_1,x_2)=\bm{I}.
\end{align*}
This is an instance of CASBP with channel mismatch because here, $$\bm{gg}^{\top}=\begin{bmatrix}
0 & 0\\
0 & 1
\end{bmatrix}$$ and $\bm{\Sigma}=\bm{I}$ are not proportional.

\begin{figure*}[t!]
    \centering
\includegraphics[width=0.99\linewidth]{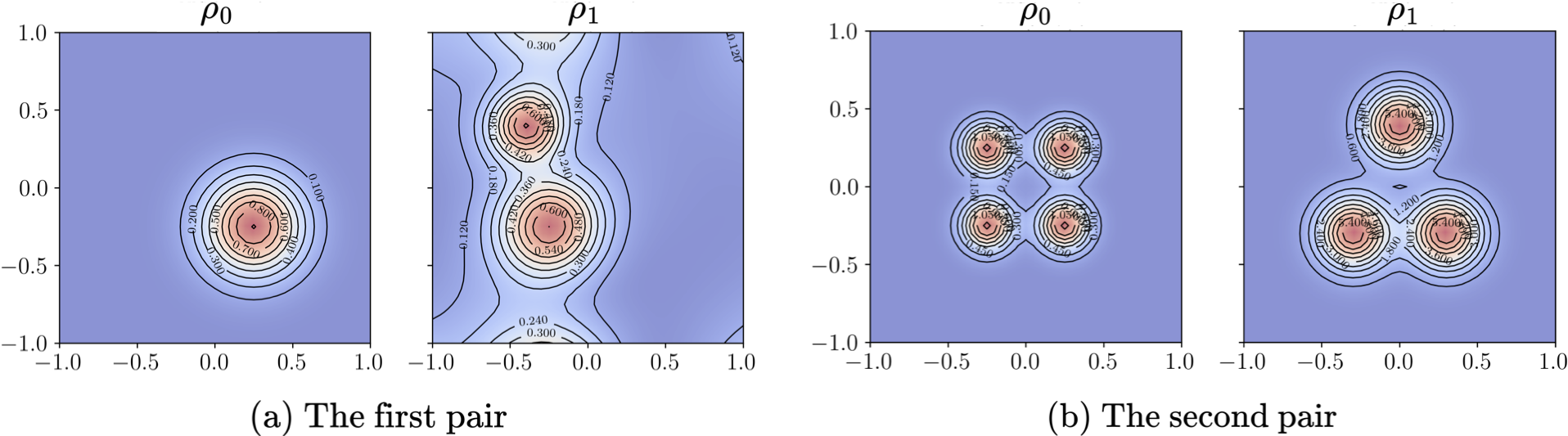}
\caption{The two pairs of endpoint PDFs used in the numerical example in Sec. \ref{sec:NumericalExample}.}
\label{fig:EndpointPDFs}
\end{figure*}

We solve the above instance of problem \eqref{CASBP} for two pairs of endpoint PDFs (Fig. \ref{fig:EndpointPDFs}): one pair being 
\begin{align*}
&\rho_{0}(\bm{x}) = \mathcal{N}\left(\begin{pmatrix}0.25\\
-0.25\end{pmatrix},\frac{1}{20}\bm{I}\right),\\
&\rho_{1}(\bm{x}) = \frac{1}{2}\mathcal{N} \left(\begin{pmatrix}-0.4\\
0.4\end{pmatrix}, \frac{1}{40}\bm{I}\right) +\!\frac{1}{2}\mathcal{N} \left(\begin{pmatrix}-0.25\\-0.25\end{pmatrix}, \frac{1}{20}\bm{I} \right),
\end{align*}
the other pair being
\begin{align*}
&\rho_{0}(\bm{x}) = \dfrac{1}{4}\sum_{i=1}^{4}\mathcal{N} \left(\bm{\mu}_{0i},\frac{1}{80}\bm{I}\right), \;\forall\bm{\mu}_{0i}\in\{-0.25,0.25\}^{2},\\
&\rho_{1}(\bm{x}) = \frac{1}{3}\mathcal{N} \left(\begin{pmatrix}0\\
0.4\end{pmatrix}, \frac{1}{40} \bm{I} \right) +\!\frac{1}{3}\mathcal{N} \left( \begin{pmatrix}-0.3\\
-0.3\end{pmatrix} , \frac{1}{40}\bm{I} \right)\\
&\qquad\qquad\qquad\qquad\qquad\qquad+\!\frac{1}{3}\mathcal{N} \left( \begin{pmatrix}0.3\\
-0.3\end{pmatrix} , \frac{1}{40}\bm{I} \right),
\end{align*}
all re-normalized over $[-1,1]^2$.

\begin{figure*}[t!]
    \centering
    \begin{subfigure}[b]{0.45\linewidth}
        \centering
        \includegraphics[width=\linewidth]{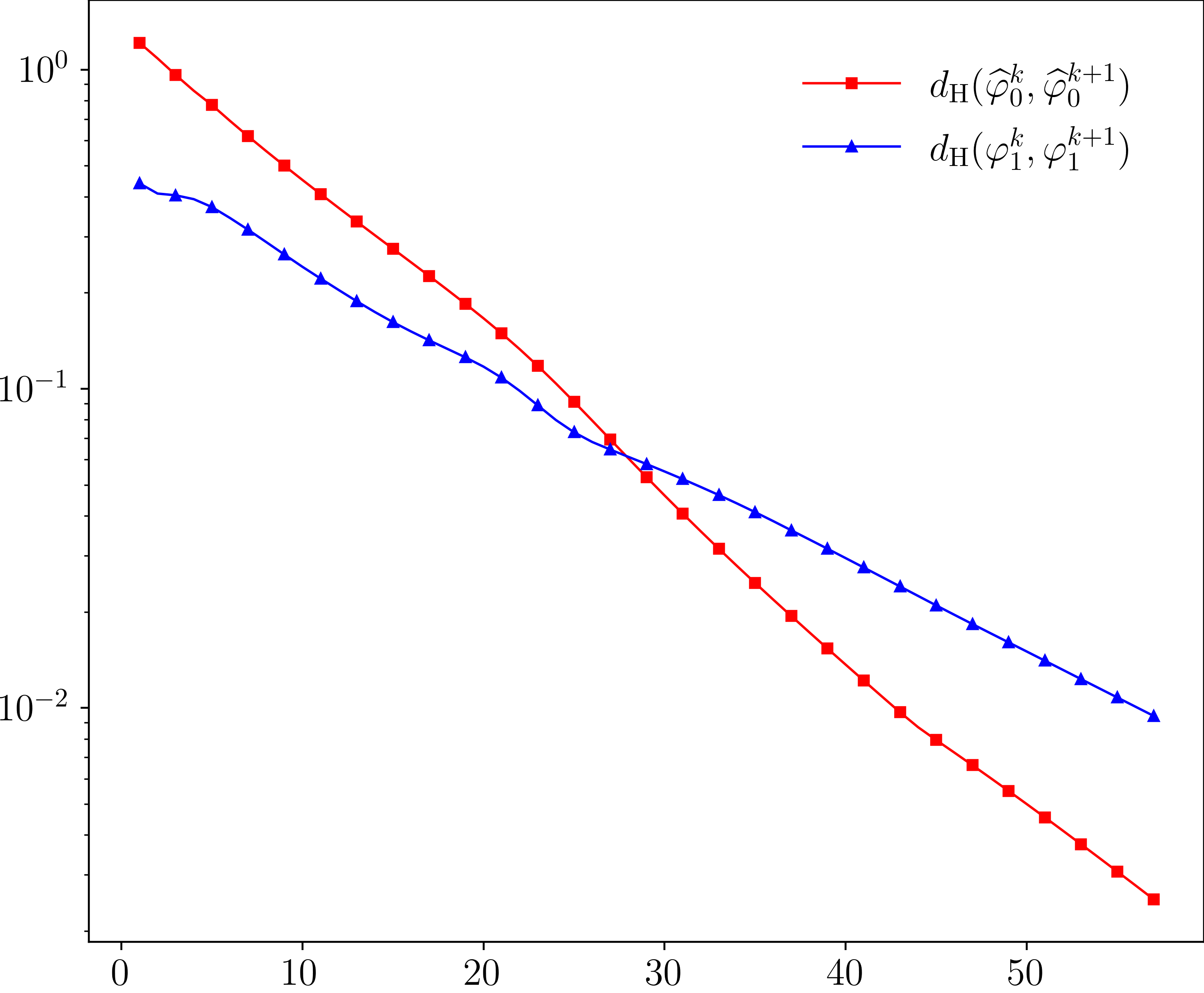}
        \caption{For $\rho_0,\rho_1$ as in Fig. \ref{fig:EndpointPDFs}(a).}
        \label{fig:sub1}
    \end{subfigure}%
    \hfill
    \begin{subfigure}[b]{0.45\linewidth}
        \centering
        \includegraphics[width=\linewidth]{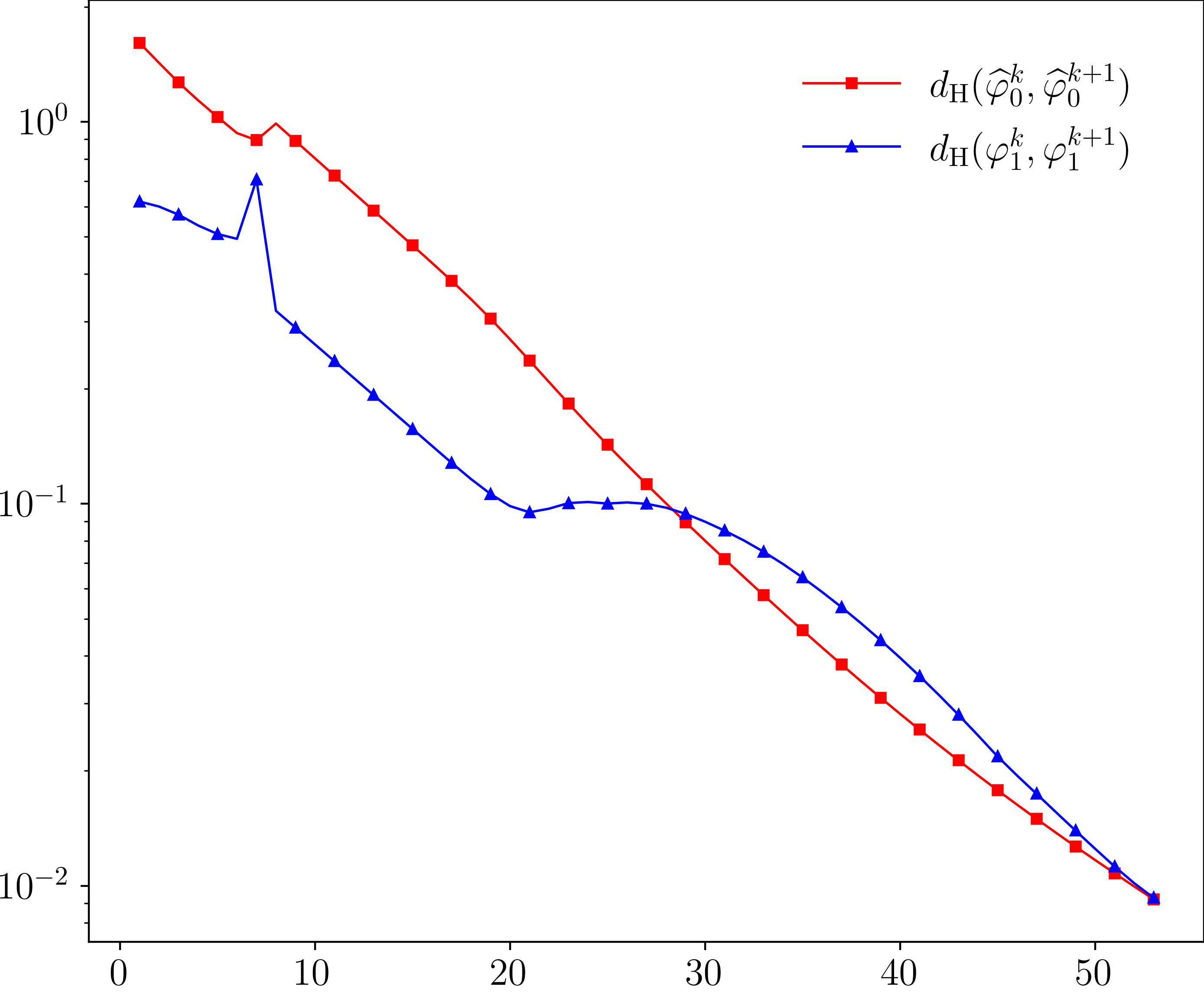}
        \caption{For $\rho_0,\rho_1$ as in Fig. \ref{fig:EndpointPDFs}(b).}
        \label{fig:sub2}
    \end{subfigure}

    \caption{Convergence of the proposed Algorithm \ref{alg:memhorn} for the numerical example in Sec. \ref{sec:NumericalExample}.}
    \label{fig:ConvergenceHilbert}
\end{figure*}

For implementing Algorithm \ref{alg:memhorn} to this problem data, we fix the numerical tolerance $\varepsilon=10^{-2}$ and $\texttt{maxiter}=200$. We perform the backward time marching for the nonlinear PDE IVP \eqref{varphiPDE} via a forward-in-time central-in-space (FTCS) finite difference solver with spatial resolution {{$\Delta x_1 = \Delta x_2 = 2\times10^{-2}$ for the domain $[-1,1]^2$, and temporal resolution $\Delta t = 5\times10^{-5}$}} for the time horizon $[0,1]$. We then forward time march the PDE \eqref{varphihatPDE} by substituting the \emph{most recent backward pass solution of \eqref{varphiPDE}}, and then applying FTCS finite difference {{with the same resolution}}. We then update the boundary condition using \eqref{BilinearBC}, and repeat. 

The resulting convergence with respect to the Hilbert metric $d_{\mathrm{H}}$ is shown in Fig. \ref{fig:ConvergenceHilbert}.

\begin{figure*}[h]
    \centering
    \begin{subfigure}[b]{\linewidth}
        \centering
        \includegraphics[width=0.9\linewidth]{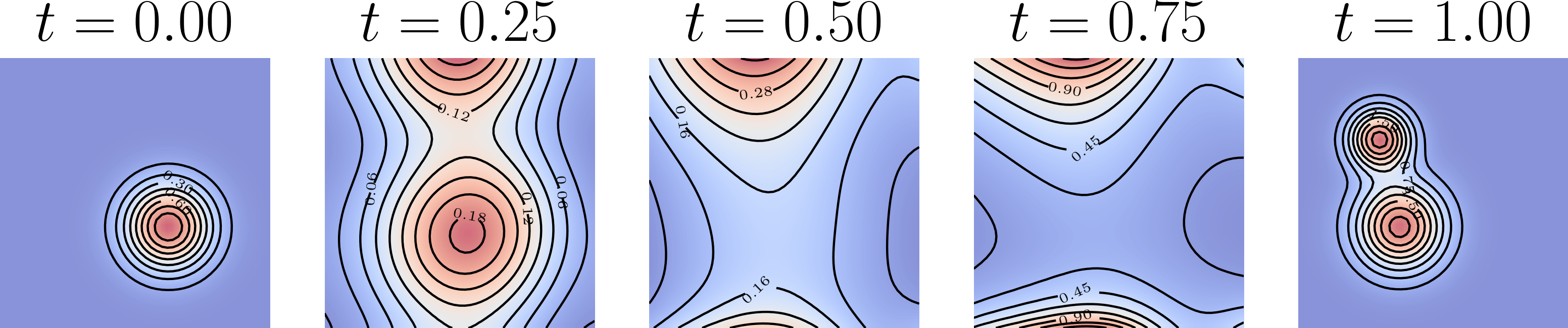} 
        \label{fig:top_s1}
    \end{subfigure}
\vspace{-0.1in} 

\begin{subfigure}[b]{\linewidth}
        \centering
        \includegraphics[width=0.9\linewidth]{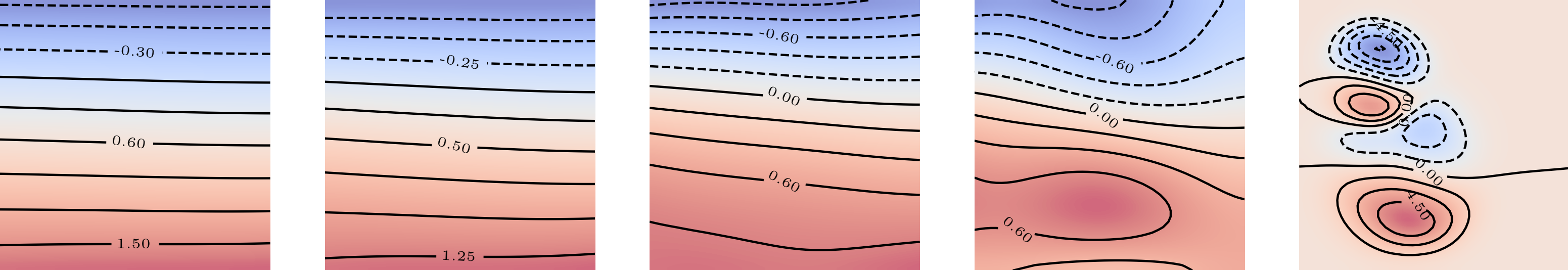}
        \label{fig:bottom_s1}
    \end{subfigure}
    
    \caption{Snapshots of $\rho^{\bm{u}}_{\mathrm{opt}}$ (top panel) and $\bm{u}_{\mathrm{opt}}$ (bottom panel) for the numerical example in Sec. \ref{sec:NumericalExample} with endpoint PDFs as in Fig. \ref{fig:EndpointPDFs}(a). All subplots are over the state domain $[-1,1]^2$.}
    \label{fig:PDFcontrolSim1}
\end{figure*}
\begin{figure*}[h]
    \centering
    \begin{subfigure}[b]{\linewidth}
        \centering
        \includegraphics[width=0.9\linewidth]{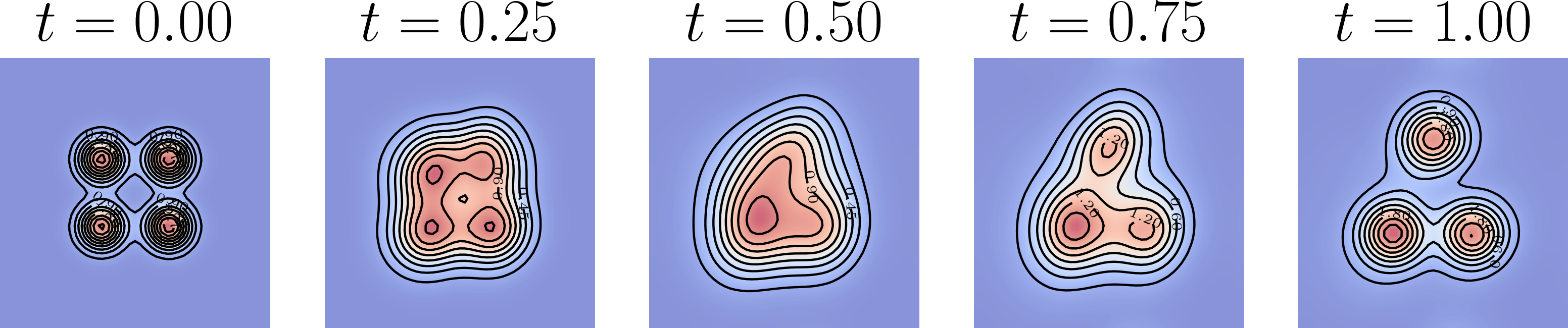} 
        \label{fig:top_s2}
    \end{subfigure}
\vspace{-0.1in} 

\begin{subfigure}[b]{\linewidth}
        \centering
        \includegraphics[width=0.9\linewidth]{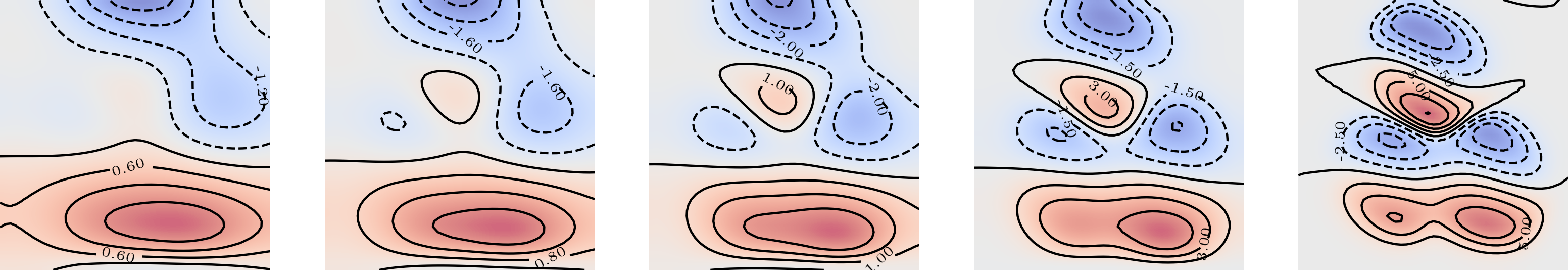}
        \label{fig:bottom_s2}
    \end{subfigure}
    
    \caption{Snapshots of $\rho^{\bm{u}}_{\mathrm{opt}}$ (top panel) and $\bm{u}_{\mathrm{opt}}$ (bottom panel) for the numerical example in Sec. \ref{sec:NumericalExample} with endpoint PDFs as in Fig. \ref{fig:EndpointPDFs}(b). All subplots are over the state domain $[-1,1]^2$.}
    \label{fig:PDFcontrolSim2}
\end{figure*}

The corresponding solution pairs $\left(\rho^{\bm{u}}_{\mathrm{opt}},\bm{u}_{\mathrm{opt}}\right)$ are shown in Fig. \ref{fig:PDFcontrolSim1} and Fig. \ref{fig:PDFcontrolSim2}.

\begin{figure*}[ht!]
\centering
\includegraphics[width=0.9\linewidth]{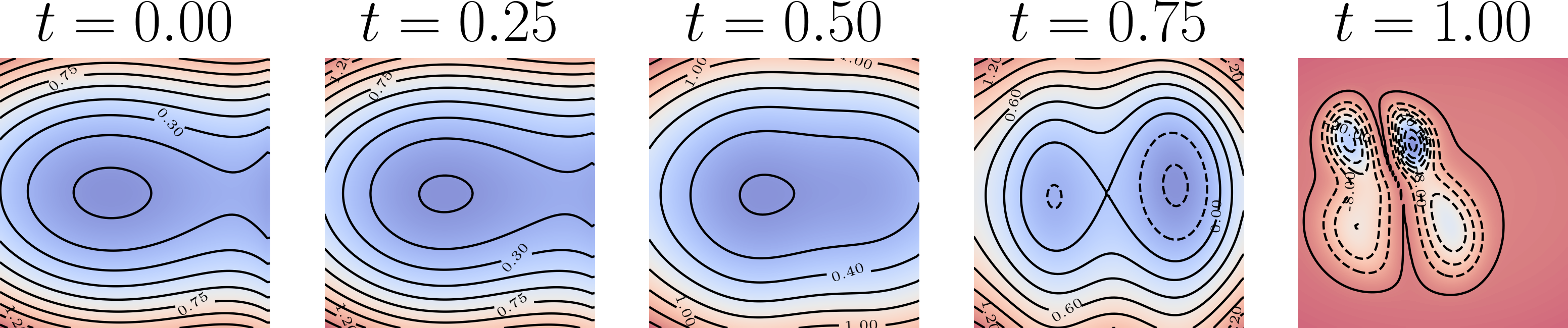}
\caption{Snapshots of $q_{\varphi} + q/\lambda$ with $\lambda=1$ for the numerical example in Sec. \ref{sec:NumericalExample} with endpoint PDFs as in Fig. \ref{fig:EndpointPDFs}(a). All subplots are over the state domain $[-1,1]^2$.}
\label{fig:SumOfReactionRates}
\end{figure*}

Fig. \ref{fig:SumOfReactionRates} shows the snapshots of $q_{\varphi} + q/\lambda$ (here $\lambda=1$) for the converged solution. This particular quantity appears as a sufficient condition for non-expansiveness of $\mathcal{B}$ in Proposition \ref{prop:Bnonexpansive}, and for our numerical example, has the form
$$q_{\varphi} + q = \frac{1}{2}\left(\nabla_{\bm{x}}\log\varphi\right)^{\top}\begin{bmatrix}
-1 & 0\\
0 & 0
\end{bmatrix}\nabla_{\bm{x}}\log\varphi + \frac{1}{2}\bm{x}^{\top}\begin{bmatrix}
1 & 0\\
0 & 2
\end{bmatrix}
\bm{x}.$$
From Fig. \ref{fig:SumOfReactionRates}, this quantity, during the backward pass from $t=1$ to $t=0$, becomes nonnegative after initial transients, i.e., is nonnegative for most but not for all times. This implies there is room for future improvement for our theoretical guarantees, and that the practical performance of the proposed Algorithm \ref{alg:memhorn} is better than the guarantees proved herein. An improved analysis will require new techniques and will be pursued in our future work.


\section{Conclusions}\label{sec:Conclusions}
This work proposes a computational algorithm for solving the generic control-affine Schr\"{o}dinger bridge problem with input and noise channel mismatch. This algorithm enables computational synthesis of optimal feedback controller for nonlinear non-Gaussian density steering over a fixed finite horizon. The proposed algorithm, referred to as ``Sinkhorn with memory," can be seen as an extension of the dynamic Sinkhorn recursion -- the latter is not applicable in the channel mismatch case. We provide a local stability guarantee for the proposed algorithm. This guarantee takes the form of convergence to a Hilbert metric ball around the fixed point with a radius estimate in terms of the channel mismatch magnitude. Numerical examples highlight that the proposed algorithm performs well in practice. 

\section*{References}

\bibliographystyle{IEEEtran}
\bibliography{References.bib}

\end{document}